%%%%%%%%%%%%%%%%%%%%%%%%%%%%%%%%%%%%%%%%%
% Short Sectioned Assignment
% LaTeX Template
% Version 1.0 (5/5/12)
%
% This template has been downloaded from:
% http://www.LaTeXTemplates.com
%
% Original author:
% Frits Wenneker (http://www.howtotex.com)
%
% License:
% CC BY-NC-SA 3.0 (http://creativecommons.org/licenses/by-nc-sa/3.0/)
%
%%%%%%%%%%%%%%%%%%%%%%%%%%%%%%%%%%%%%%%%%

%----------------------------------------------------------------------------------------
%	PACKAGES AND OTHER DOCUMENT CONFIGURATIONS
%----------------------------------------------------------------------------------------

\documentclass[paper=a4, fontsize=11pt]{scrartcl} % A4 paper and 11pt font size

\usepackage[T1]{fontenc} % Use 8-bit encoding that has 256 glyphs
\usepackage{fourier} % Use the Adobe Utopia font for the document - comment this line to return to the LaTeX default
\usepackage[english]{babel} % English language/hyphenation
\usepackage{amsmath,amsfonts,amsthm} % Math packages
\usepackage{amsmath, calligra, mathrsfs}
\usepackage{amssymb}
\usepackage[mathscr]{euscript}
\usepackage{verbatim}
\usepackage[dvipsnames]{xcolor}
\usepackage{mdframed} % documentation "The mdframed package" on https://www.ps.uni-saarland.de/Publications/documents/Aiken_2009_File.pdf
% Also, don't forget to check for updates on MiKTeX console!
% Begin and end with mdframed, use option to do things like backgroundcolor = whatever color we want
\usepackage{soulutf8}
\usepackage{stmaryrd}
\usepackage{tikz-cd}
\usepackage{hyperref}
% \usepackage{lineno}
% \linenumbers
\usepackage{lipsum} % Used for inserting dummy 'Lorem ipsum' text into the template

\usepackage{sectsty} % Allows customizing section commands
\allsectionsfont{\centering \normalfont\scshape} % Make all sections centered, the default font and small caps

\usepackage{fancyhdr} % Custom headers and footers
\usepackage{xurl}
\pagestyle{fancyplain} % Makes all pages in the document conform to the custom headers and footers
\fancyhead{} % No page header - if you want one, create it in the same way as the footers below
\fancyfoot[L]{} % Empty left footer
\fancyfoot[C]{} % Empty center footer
\fancyfoot[R]{\thepage} % Page numbering for right footer
 % Remove header underlines
 % Remove footer underlines
\newcommand*{\sheafhom}{\mathcal{H}\kern -.5pt om}
\setlength{\headheight}{13.6pt} % Customize the height of the header

\numberwithin{equation}{section} % Number equations within sections (i.e. 1.1, 1.2, 2.1, 2.2 instead of 1, 2, 3, 4)
\numberwithin{figure}{section} % Number figures within sections (i.e. 1.1, 1.2, 2.1, 2.2 instead of 1, 2, 3, 4)
\numberwithin{table}{section} % Number tables within sections (i.e. 1.1, 1.2, 2.1, 2.2 instead of 1, 2, 3, 4)

\newtheorem{thm}{Theorem}[section]
\newtheorem{cor}[thm]{Corollary}
\newtheorem{prop}[thm]{Proposition}
\newtheorem{obs}[thm]{Observation}

\theoremstyle{definition}
\newtheorem{defn}[thm]{Definition}

\theoremstyle{remark}
\newtheorem{rem}[thm]{Remark}

\DeclareMathOperator{\cone}{cone}

\DeclareMathOperator{\rk}{rank}

\setlength\parindent{0pt} % Removes all indentation from paragraphs - comment this line for an assignment with lots of text

%----------------------------------------------------------------------------------------
%	TITLE SECTION
%----------------------------------------------------------------------------------------

\newcommand{\horrule}[1]{\rule{\linewidth}{#1}} % Create horizontal rule command with 1 argument of height

\title{	
	\normalfont \normalsize 
	\textsc{} \\ [25pt] % Your university, school and/or department name(s)
	\horrule{0.5pt} \\[0.4cm] % Thin top horizontal rule
	% Ramsey numbers or Ramsey theory below? -- can be applied to more general settings...
	\huge Anti-Ramsey theory problems, lattice point counts on polytopes, and Hodge structures on the cohomology of toric varieties % as the reciprocal polynomial of the Hilbert--Poincar\'e series of the Stanley--Reisner ring

	\horrule{2pt} \\[0.5cm] % Thick bottom horizontal rule
}

\author{Soohyun Park} % Your name

\date{\normalsize June 14, 2022} % Today's date or a custom date

\begin{document}
	
	\maketitle 
	
	\begin{abstract}
		\noindent We find families of graphs $G$ and subgraphs $H$ of $G$ such that the number of edge colorings of $G$ avoiding a monochromatic coloring of $H$ is determined by lattice point counts or a Hodge structure on the cohomology of a certain toric variety. In general, this gives a class of ``anti-Ramsey theory problems'' with a geometric structure. For example, we find one for Ramsey numbers of classes of such graphs. The key observation is that our previous result expressing simplicial chromatic polynomials in terms of $h$-vectors of auxiliary simplicial complexes \cite{Pa} can be reinterpreted as one on edge colorings of graphs avoiding monochromatic colorings of specified forbidden subgraphs. Specializing to simplicial complexes arising from triangulations of polytopes (e.g. unimodular triangulations), we obtain families of graphs and forbidden subgraphs where edge colorings avoiding monochromatic colorings of the forbidden subgraphs depend on lattice point counts or Hodge structures on the cohomology of toric varieties. 
	\end{abstract}
	
	\section{Introduction}
	
	Roughly speaking, Ramsey theory studies structures which are forced to contain a ``regular'' substructure when they are large enough. For example, Ramsey numbers (and graph-theoretic instances in general) give a threshold for this size in order for an edge coloring of a complete graph of some size to contain a monochromatic subgraph (e.g. a clique). There has been extensive work on related to these kinds of questions and an overview is given in a survey of Conlon--Fox--Sudakov \cite{CFS}. \\
	
	The specific types of problems which we will focus on have to do with \emph{avoiding} monochromatic structures in edge colorings. We will term these ``anti-Ramsey problems''. An overview of such ``forbidden graph'' problems is given by Bollob\'as in \cite{Bol}. Some examples of results in this direction include those of Alon--Balogh--Keevash--Sudakov \cite{ABKS} on avoiding monochromatic cliques and Yuster \cite{Yus} on avoiding monochromatic triangles. Fujita--Liu--Magnant \cite{FLM} and Kano--Li \cite{KL} give surveys involving results on edge colorings avoiding monochromatic colorings of specified structures. A common property of results of this type (and extremal graph theory problems in general) is that they are often asymptotic or involve some kind of quantitative bound. In this work, we give families of graphs where this problem can be studied from a \emph{structural} perspective coming from topology. This enables us to obtain specializations where the colorings are parametrized by lattice points or generating functions from Hodge structures on the cohomology of toric varieties. \\
	
	Our starting point is a reinterpretation of the simplicial chromatic polynomial (Definition \ref{simpchromdef}, Proposition \ref{simpmonint}) as a count of edge colorings of graphs avoiding monochromatic colorings of subgraphs corresponding to the minimal nonfaces of the given simplicial complex. Note that the simplicial chromatic polynomial of \emph{any} simplicial complex has such an interpretation. The topological perspective comes from previous results which express the simplicial chromatic polynomials of simplicial complexes whose minimal nonfaces satisfy appropriate intersection properties in terms of $h$-vectors of auxiliary simplicial complexes (Theorem \ref{simpchromsrhilb}, Corollary \ref{simpchromhilbpol}). This gives a family of graphs $G$ and subgraphs $\{ H_i \}$ where edge colorings avoiding monochromatic colorings of $H_i$ are parametrized by $h$-vectors of simplicial complexes. A consequence is an interpretation of Ramsey numbers of classes of graphs in terms of the topology of certain configuration spaces (Corollary \ref{ramconseq}) Specializing to instances where the auxiliary simplicial complexes arise from unimodular triangulations of polytopes, we find cases where such colorings are parametrized by lattice point counts of (dilations of) polytopes (Theorem \ref{colorlatticecoh}). Finally, a specialization to compressed polytopes (Theorem \ref{simpchromcohsum}) gives cases where they are parametrized by (truncated) generating functions of Hodge structures on the cohomology of toric varieties. \\
	
	The expression of simplicial chromatic polynomials in terms of $h$-vectors of auxiliary simplicial complexes (Theorem \ref{simpchromsrhilb}, Corollary \ref{simpchromhilbpol}) and their reinterpretation in terms of edge colorings of graphs (Proposition \ref{simpmonint}) is given in Section \ref{simpramcon}. Afterwards, we specialize to instances where the auxiliary simplicial complexes arise from polytopes with unimodular triangulations to obtain find subfamilies of graphs with colorings considered coming from lattice point counts of polytopes (Theorem \ref{colorlatticecoh}) in Section \ref{latcolor}. Finally, we consider edge colorings avoiding monochromatic colorings of specified subgraphs parametrized by truncated generating functions depending on Hodge structures on the cohomology of certain toric hypersurfaces (Theorem \ref{simpchromcohsum}) in Section \ref{torcon}.
	
	\section*{Acknowledgements}
	
	I am very grateful to my advisor Benson Farb for his guidance and encouragement. Also, I would like to thank Jodi McWhirter, Vic Reiner, and Jesse Selover for helpful discussions.
	
	\section{Connections between simplicial chromatic polynomials and anti-Ramsey-type problems}  \label{simpramcon}
	
	Simplicial chromatic polynomials were originally introduced by Cooper--de Silva--Sazdanovic \cite{CdSS} as a ``categorification'' of the observation that Euler characteristics of ordered configuration spaces of points can often be parametrized using chromatic polynomials. When the minimal nonfaces of the simplicial complex satsify appropriate intersection properties, we studied this polynomial from a combinatorial point of view \cite{Pa} and expressed simplicial chromatic polynomials in terms of Hilbert series of Stanley--Reisner rings \cite{Pa}. The notation we will use writes $S$ for the simplicial complex and $V$ for the vertex set of the simplicial complex. Before discussing new material, we will first review the old result and the definitions used there.  \\
	
	Here is the definition of a simplicial chromatic polynomial (from \cite{Pa} in reference to \cite{CdSS}):
	
	\begin{defn}  (Definition 2.1 on p. 725 and p. 738 of \cite{CdSS}) \\
		Let $S$ be a simplicial complex whose $0$-skeleton is given by the vertex set $V = V(S) = \{ v_1, \ldots, v_n \}$. Let $M$ be a topological space. For each simplex $\sigma = [ v_{i_1} \cdots v_{i_k} ]$, define the diagonal corresponding to $\sigma$ to be \[ D_\sigma = \{ (x_1, \ldots, x_n) \in M^n : x_{i_1} = \cdots  = x_{i_k} \}. \]
		
		We define the \textbf{simplicial configuration space} as 
		
		\begin{equation}
			M_S = M^n \setminus \bigcup_{\sigma \in \Delta^V \setminus S} D_\sigma \label{simpconfdefuni}
		\end{equation}
		
		where $\Delta^V$ is the simplicial complex containing all subsets of the vertices $v_i$ (analogous to a simplex generated by independent vectors corresponding to the $v_i$) and $\Delta^V \setminus S$ denotes tuples of vertices in $V$ which do \emph{not} occur as simplices in $S$. 
	\end{defn}
	
	\begin{defn} \label{simpchromdef} (Definition 6.1 on p. 738 of \cite{CdSS}) \\
		Let $S$ be a simplicial complex and let $M$ be a manifold. Given $S$ and $M$, let \[ \chi_c(S, M) := \sum (-1)^k \rk H_c^k(M_S). \]
		
		The \textbf{simplicial chromatic polynomial} of a simplicial complex $S$ is the polynomial defined by the assignment $\chi_c(S) : t \mapsto \chi_c(S, \mathbb{CP}^{t - 1})$.
	\end{defn}
	
	For simplicial complexes $S$ whose minimal nonfaces satisfy appropriate intersection properties, we can show that their simplicial chromatic polynomials are determined by $h$-vectors of auxiliary simplicial complexes $T(S)$.
	
	\begin{defn} \label{propidef}
		Let $S$ be a simplicial complex with minimal nonfaces $\sigma_1, \ldots, \sigma_r$. A simplicial complex $S$ satisfies \textbf{property $\mathbf{I}$} if there is a collection of finite sets $\alpha_i$ such that $|\alpha_i| = |\sigma_i| - 1$ for each $1 \le i \le r$ and $\alpha_I \cap \alpha_p = \sigma_I \cap \sigma_p = \emptyset$ if $\sigma_I \cap \sigma_p = \emptyset$ and $|\alpha_I \cap \alpha_p| = |\sigma_I \cap \sigma_p| - 1$ if $|I| \ge 2$ and $\sigma_I \cap \sigma_p \ne \emptyset$ for each subset $I \subset [r]$ and $p \notin I$.
	\end{defn}

	\begin{rem} \label{anypropi}
		Note that \emph{any} simplicial complex can be set equal to $T(S)$ for some simplicial complex $S$ satisfying property $I$. For example, add a new extraneous vertex to every minimal nonface of $T(S)$.
	\end{rem}
	
	\begin{defn} \label{compdef}  
		Given a $k$-element subset $I = \{ \sigma_1, \ldots, \sigma_k \} \subset \Delta^V \setminus S$, let $G_I$ be the graph whose vertices are the $\sigma_i$ with two vertices corresponding to $\sigma_i, \sigma_j$ being connected by an edge if and only if $\sigma_i \cap \sigma_j \ne \emptyset$. Let $c(I)$ be the number of connected components of $G_I$. \\
	\end{defn}
	
	\begin{thm} \label{simpchromsrhilb} (Theorem 1.5 on p. 4 and Proposition 2.8 on p. 9 of \cite{Pa}) ~\\
		\vspace{-3mm}
		\begin{enumerate}
			\item  Let $r$ be the number of minimal nonfaces of $S$. If $c(I) = 1$ for all $I \subset [r]$, then 
			
			\begin{align*}
				\chi_c(S)(t) - t^n &= t^{n + 1} ((1 - t^{-1})^{n - d} h_S(t^{-1}) - 1 ) \\
				&= t^{d + 1}( (t - 1)^{n - d} h_S(t^{-1}) - t^{n - d} ) \\
				\Longrightarrow \frac{\chi_c(S)(t) - t^n + t^{n + 1} }{t(t - 1)^{n - d}} &= t^d h_S(t^{-1}).  
			\end{align*}
			
			\item Suppose that $S$ satisfies property $I$. Then, there is some auxiliary simplicial complex $T(S)$ such that 
			
			\begin{align*}
				\chi_c(S)(t) &= t^n (1 - t^{-1})^{n - d} h_{T(S)}(t^{-1}) \\
				&= t^d (t - 1)^{n - d} h_{T(S)}(t^{-1}) \\
				\Longrightarrow \frac{\chi_c(S)(t)}{t^{d - m} (t - 1)^{n - d}} &= t^m h_{T(S)}(t^{-1})
			\end{align*}
			
			formally, where $m = \dim T(S)$. 
			
		\end{enumerate}
	\end{thm}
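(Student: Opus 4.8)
The plan is to compute $\chi_c(S)(t)=\chi_c\!\left(M_S,\mathbb{CP}^{t-1}\right)$ explicitly by an inclusion--exclusion over the diagonal subspace arrangement, and then to recognize the resulting polynomial as a specialization of the Hilbert series of a Stanley--Reisner ring. Since $\mathbb{CP}^{t-1}$ is a smooth projective variety, $M_S$ is a complex quasi-projective variety, so $\chi_c$ is well defined, additive ($\chi_c(X)=\chi_c(U)+\chi_c(Z)$ for $Z\subseteq X$ closed with open complement $U$), and multiplicative on products. The first step is the reduction $M_S=M^n\setminus\bigcup_{i=1}^r D_{\sigma_i}$, where $\sigma_1,\dots,\sigma_r$ are the minimal nonfaces of $S$: every nonface $\tau$ contains some $\sigma_i$, and then $D_\tau\subseteq D_{\sigma_i}$, so restricting the union in \eqref{simpconfdefuni} to minimal nonfaces changes nothing. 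Applying additivity of $\chi_c$ to the splitting of $M^n$ into the open set $M_S$ and the closed set $\bigcup_i D_{\sigma_i}$, followed by inclusion--exclusion, gives
\[
\chi_c(S)(t)=\sum_{I\subseteq[r]}(-1)^{|I|}\,\chi_c\!\left(\bigcap_{i\in I}D_{\sigma_i},\ \mathbb{CP}^{t-1}\right).
\]

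Next I would evaluate each term. For each $I$, the intersection $\bigcap_{i\in I}D_{\sigma_i}$ is the subspace of $M^n$ on which two coordinates are forced equal exactly when the corresponding vertices lie in a common connected component of the graph $G_I$ of Definition \ref{compdef}; hence it is homeomorphic to a product of $n-\left|\bigcup_{i\in I}\sigma_i\right|+c(I)$ copies of $M$ — one factor for each vertex outside $\bigcup_{i\in I}\sigma_i$ and one for each connected component of $G_I$. Using $\chi_c(\mathbb{CP}^{t-1})=t$ and multiplicativity, this yields the master formula $\chi_c(S)(t)=\sum_{I\subseteq[r]}(-1)^{|I|}\,t^{\,n-\left|\bigcup_{i\in I}\sigma_i\right|+c(I)}$, with the $I=\varnothing$ term equal to $t^n$. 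The combinatorial input on the other side is the standard Hilbert series computation: for a simplicial complex $\Sigma$ on $N$ vertices with minimal nonfaces $\tau_j$ and $d_\Sigma=\dim k[\Sigma]$, resolving the squarefree monomial ideal $(x_{\tau_1},\dots)\subseteq k[x_1,\dots,x_N]$ by inclusion--exclusion on least common multiples gives
\[
(1-t)^{\,N-d_\Sigma}\,h_\Sigma(t)=\sum_{J}(-1)^{|J|}\,t^{\left|\bigcup_{j\in J}\tau_j\right|}.
\]

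For Part (1), the hypothesis $c(I)=1$ for all nonempty $I$ lets me pull the $I=\varnothing$ term out of the master formula and write the rest as $t^{n+1}\sum_{I\neq\varnothing}(-1)^{|I|}t^{-\left|\bigcup_{i\in I}\sigma_i\right|}$. Adding and subtracting the missing $I=\varnothing$ term $t^{n+1}$ and substituting $t\mapsto t^{-1}$ into the Hilbert series identity with $\Sigma=S$, $N=n$, $d_S=d$, produces $\chi_c(S)(t)-t^n=t^{n+1}\big((1-t^{-1})^{n-d}h_S(t^{-1})-1\big)$, which is the first displayed line; the other two lines are the elementary identity $t^{n+1}(1-t^{-1})^{n-d}=t^{d+1}(t-1)^{n-d}$ together with dividing through by $t(t-1)^{n-d}$.

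For Part (2), the role of property $\mathbf{I}$ is to replace each $\sigma_i$ by a smaller set $\alpha_i$ so that the exponent $n-\left|\bigcup_{i\in I}\sigma_i\right|+c(I)$ in the master formula becomes $n-\left|\bigcup_{i\in I}\alpha_i\right|$. Concretely, I would let $T(S)$ be the simplicial complex on vertex set $\bigcup_i\alpha_i$ whose minimal nonfaces are the $\alpha_i$ supplied by Definition \ref{propidef}, with $n-d$ taken to be the codimension $\left|\bigcup_i\alpha_i\right|-\dim k[T(S)]$, and then prove the key identity $\left|\bigcup_{i\in I}\alpha_i\right|=\left|\bigcup_{i\in I}\sigma_i\right|-c(I)$ for every $I\subseteq[r]$ by induction on $|I|$: when a nonface $\sigma_p$ is adjoined, the change in $\left|\bigcup_{i\in I}\sigma_i\right|$ minus the change in $c(I)$ is controlled exactly by the two cases $\alpha_I\cap\alpha_p=\varnothing$ when $\sigma_I\cap\sigma_p=\varnothing$ and $|\alpha_I\cap\alpha_p|=|\sigma_I\cap\sigma_p|-1$ otherwise (writing $\sigma_I=\bigcup_{i\in I}\sigma_i$), which are precisely the defining conditions of property $\mathbf{I}$. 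Granting this identity and the Hilbert series identity applied to $\Sigma=T(S)$, the master formula collapses to $\chi_c(S)(t)=t^n(1-t^{-1})^{n-d}h_{T(S)}(t^{-1})$, and the remaining two lines follow by the same rewriting as in Part (1), now with $m=\dim T(S)$. I expect Part (2) to be the main obstacle: one must first check that the $\alpha_i$ genuinely form the minimal nonfaces of a simplicial complex (no $\alpha_i$ contains another, read off from the conditions of property $\mathbf{I}$), and then carry the induction for $\left|\bigcup_{i\in I}\alpha_i\right|=\left|\bigcup_{i\in I}\sigma_i\right|-c(I)$, tracking how the connected-component count of $G_I$ interacts with the intersection sizes $|\sigma_I\cap\sigma_p|$ as vertices enter the union; by contrast Part (1) is essentially a formal consequence of the $\chi_c$ computation once the reduction to minimal nonfaces and the identification of the diagonal intersections are in place.
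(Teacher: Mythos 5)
Your proposal is correct and follows essentially the same route as the cited source \cite{Pa} (note that the present paper only quotes this theorem and does not reprove it): compute $\chi_c$ by additivity and inclusion--exclusion over the diagonal arrangement to obtain $\chi_c(S)(t)=\sum_{I\subseteq[r]}(-1)^{|I|}t^{\,n-|\bigcup_{i\in I}\sigma_i|+c(I)}$, then match this against the inclusion--exclusion (Taylor complex) expansion of the Stanley--Reisner Hilbert series, with property $I$ serving precisely to supply the identity $|\bigcup_{i\in I}\alpha_i|=|\bigcup_{i\in I}\sigma_i|-c(I)$. You correctly isolate that identity, together with the check that the $\alpha_i$ really are the minimal nonfaces of a genuine complex $T(S)$, as the one substantive point still to be verified; that is exactly the content of Proposition 2.8 of \cite{Pa}.
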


	There are also other intersection patterns of the minimal nonfaces of $S$ where the simplicial chromatic polynomial is determined by the $h$-vector of an auxiliary simplicial complex $T(S)$.

	\begin{cor} \label{simpchromhilbpol} (Corollary 1.8 on p. 5 of \cite{Pa}) \\
		Let $S$ be a simplicial complex and $\sigma_1, \ldots, \sigma_r$ be the minimal nonfaces of $S$. Suppose that $c(I) = a$ for all subsets $I \subset [r]$ with $|I| \ge 2$. If $h_{a + r} \ge 1$, $h_{a + 1}, h_{a + 2} \ge 3$, and $h_i \ge 1$ for all $i \ge a$, then \[ t^{-n} - \chi_c(S)(t^{-1}) + (t^{n + 1} - t^{n + a}) \sum_{\sigma_i} t^{-|\sigma_i|} = t^{-n} P(t^{-1}) \] for the Hilbert polynomial $P = P(x)$ of some $k$-algebra. \\
		
		If $a = 1$, this specializes to \[ t^{-n} - \chi_c(S)(t^{-1}) = t^{-n} P(t^{-1}). \]
	\end{cor}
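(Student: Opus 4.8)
The plan is to derive the identity from Theorem~\ref{simpchromsrhilb} — specifically from the version in \cite{Pa} valid when the intersection pattern of the minimal nonfaces is uniform, $c(I) = a$ for all $I \subset [r]$ with $|I| \ge 2$ — and then to invoke Macaulay's theorem on Hilbert functions of standard graded $k$-algebras to recognize the resulting polynomial as a Hilbert polynomial. First I would write down the $c(I) = a$ analogue of Theorem~\ref{simpchromsrhilb}(1): it expresses $\chi_c(S)(t)$, after incorporating the correction coming from the sizes $|\sigma_i|$ of the minimal nonfaces, as $t^n(1 - t^{-1})^{n-d}$ times the $h$-polynomial $\sum_i h_i t^{-i}$ of the relevant auxiliary complex. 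The term $(t^{n+1} - t^{n+a})\sum_{\sigma_i} t^{-|\sigma_i|}$ in the statement is precisely what absorbs the discrepancy between the ``$c(I) = 1$'' bookkeeping of part (1) of the theorem and the general ``$c(I) = a$'' case; note it vanishes identically when $a = 1$, which is why the $a=1$ assertion takes the cleaner form.

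Next I would substitute $t \mapsto t^{-1}$ throughout and clear the factor $(1 - t)^{n-d}$. A routine — if bookkeeping-heavy — rearrangement should then show that the left-hand side
\[
t^{-n} - \chi_c(S)(t^{-1}) + (t^{n+1} - t^{n+a})\sum_{\sigma_i} t^{-|\sigma_i|}
\]
equals $t^{-n} P(t^{-1})$ for a genuine polynomial $P(x) = \sum_i c_i x^i$, where each coefficient $c_i$ is obtained from $h_i$ by subtracting the (bounded) contributions of the minimal nonfaces $\sigma_j$ of the relevant sizes in that degree. In particular $c_0 = 1$, and the corrections remove at most the number of $\sigma_j$ of a given size from each $c_i$ — at most $2$ in the critical degrees $a+1$ and $a+2$.

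The crux is then to verify that the hypotheses $h_{a+r} \ge 1$, $h_{a+1}, h_{a+2} \ge 3$, and $h_i \ge 1$ for $i \ge a$ force the coefficient sequence $(c_i)$ of $P$ to be an $O$-sequence, i.e.\ $c_0 = 1$ together with the Macaulay growth bounds $c_{i+1} \le c_i^{\langle i\rangle}$. The positivity thresholds are tuned exactly for this: after the correction terms have subtracted at most $2$ from $c_{a+1}, c_{a+2}$ and nothing from being positive elsewhere, the surviving sequence still begins with enough room in degrees $a+1, a+2$ to accommodate all later nonzero entries (whose total size is controlled by $h_{a+r} \ge 1$ and the shape of the $h$-vector) without violating Macaulay's inequalities. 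By Macaulay's theorem $(c_i)$ is then the Hilbert function of a standard graded $k$-algebra — concretely, a polynomial ring modulo a lex-segment monomial ideal — and $P$ is its associated Hilbert polynomial, giving the claimed identity. The $a = 1$ specialization is then immediate, since the correction term is $0$ and $P(t^{-1}) = 1 - t^n\chi_c(S)(t^{-1})$ with the same $O$-sequence property.

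I expect the main obstacle to be this third step: translating the combinatorial positivity hypotheses into the Macaulay $O$-sequence condition, in tandem with pinning down exactly which minimal-nonface correction lands in which degree. Getting the numerical thresholds ($\ge 3$ in degrees $a+1, a+2$, $\ge 1$ elsewhere) to match the Macaulay bounds requires care about the smallest minimal nonfaces and the precise form of the $c(I) = a$ formula drawn from \cite{Pa}; everything else is formal manipulation of the generating-function identity in Theorem~\ref{simpchromsrhilb}.
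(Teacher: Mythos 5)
This corollary is imported verbatim from Corollary 1.8 of \cite{Pa}; the present paper states it without proof, so there is no internal argument to compare yours against. On its own terms, your outline has a reasonable general shape: start from the $c(I)=a$ refinement of Theorem \ref{simpchromsrhilb}, substitute $t \mapsto t^{-1}$, and try to recognize the resulting coefficient sequence as the Hilbert function of a graded $k$-algebra. Your observation that the correction term $(t^{n+1}-t^{n+a})\sum_{\sigma_i} t^{-|\sigma_i|}$ vanishes identically when $a=1$ is correct and does account for the cleaner form of the second assertion.

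The gap is in your third step, and it is not just bookkeeping. First, you assert that the correction subtracts ``at most $2$'' from the coefficients in degrees $a+1$ and $a+2$; nothing bounds the number of minimal nonfaces of a given cardinality by $2$, so this is reverse-engineered from the hypothesis $h_{a+1}, h_{a+2} \ge 3$ rather than derived from the identity. Second, and more seriously, the way you invoke Macaulay's theorem cannot work as described: the $O$-sequence condition $c_{i+1} \le c_i^{\langle i \rangle}$ constrains \emph{consecutive} entries, so surplus in degrees $a+1,a+2$ cannot ``accommodate'' later entries. In particular $1^{\langle i \rangle}=1$ for all $i \ge 1$, so once some $c_i=1$ every later entry must be $\le 1$; a sequence such as $(1,3,3,3,1,2)$ meets all of the stated positivity thresholds (for $a=1$) and is still not an $O$-sequence. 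Hence the hypotheses $h_i \ge 1$ for $i \ge a$ and $h_{a+1},h_{a+2} \ge 3$ do not, by themselves, force the Macaulay inequalities, and your argument does not establish that $P$ is the Hilbert function of a standard graded algebra. Closing this requires either the precise $c(I)=a$ identity from \cite{Pa} (whose output sequence may carry more structure than mere positivity), or a characterization of ``Hilbert polynomial of some $k$-algebra'' genuinely weaker than Macaulay's theorem for standard graded algebras; as written, the crux of the proof is missing.
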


	As noted in Proposition 2.2 on p. 726 of \cite{CdSS}, the simplicial chromatic polynomial specializes to the ``usual'' chromatic polynomial of a graph when the simplicial complex $S$ is the independence complex $I(G)$ of some graph $G$. However, we can consider the terms involved from a different perspective if $S$ is taken to parametrize ``monotone'' properties of graphs (i.e. $P$ such that $P$ true for $G \Longrightarrow P$ true for $H$ for any subgraph $H$ of $G$). Other such examples are on p. 99 -- 100 of \cite{J}.
	
	\begin{prop} \label{simpmonint}
		Let $[n] = \{ 1, \ldots, n \}$ and $G$ be a graph whose vertices are labeled by $[n]$. Consider a simplicial complex $S(G)$ whose vertices are labeled by the edges of $G$. Then, $\chi_c(S)(t)$ gives the number of edge colorings of $G$ using $\le t$ colors that avoid monochromatic colorings of collections of edges parametrized by the minimal nonfaces of $S(G)$. In some sense, the minimal nonfaces of $S(G)$ correspond to ``minimal forbidden subgraphs''. \\
		
		Note that \emph{any} simplicial complex can be written as $S(G)$ for some graph $G$. Also, any property $P$ of a graph preserved by its subgraphs can be parametrized by a simplicial complex (which we denote by $S(P)$).
	\end{prop}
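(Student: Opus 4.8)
The plan is to evaluate $\chi_c(S(G))(t) = \chi_c(S(G), \mathbb{CP}^{t-1})$ directly from the definition of the simplicial configuration space by stratifying the ambient space, and then to recognize the answer as a count of edge colorings. Throughout put $M = \mathbb{CP}^{t-1}$ and $n = |V(S(G))| = |E(G)|$, so $M_{S(G)} \subseteq M^n$.

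First I would stratify $M^n$ by set partitions of $[n] = V(S(G))$: for a partition $\pi$, let $M^\pi \subseteq M^n$ be the locally closed locus of tuples whose ``equal--coordinate'' relation is exactly $\pi$; this is a copy of the configuration space $\Conf_{|\pi|}(M)$, and the $M^\pi$ partition $M^n$. Each diagonal $D_\sigma$ is the union of those $M^\pi$ in which all of $\sigma$ lies inside a single block, so $M_{S(G)} = M^n \setminus \bigcup_{\sigma \notin S(G)} D_\sigma$ is precisely the disjoint union of the $M^\pi$ for which no nonface of $S(G)$ is contained in a block of $\pi$; since every nonface contains a minimal one, this is the condition that every block of $\pi$ be a face of $S(G)$. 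Because $M_{S(G)}$ is a complex quasi-projective variety, $\chi_c$ is additive along this stratification, and the Fadell--Neuwirth fibrations together with multiplicativity of Euler characteristic give $\chi_c(\Conf_k(\mathbb{CP}^{t-1})) = t(t-1)\cdots(t-k+1) =: (t)_k$, the same computation that makes $\chi_c(S)$ a polynomial in \cite{CdSS}. This yields
\[ \chi_c(S(G))(t) \;=\; \sum_{\pi:\ \text{every block of }\pi\text{ is a face of }S(G)} (t)_{|\pi|}. \]

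Next I would match this with colorings. An edge coloring of $G$ with at most $t$ colors is a map $f \colon E(G) \to \{1,\dots,t\}$; it determines the partition $\pi_f$ of $E(G)$ into color classes, and for a fixed partition $\pi$ the number of $f$ with $\pi_f = \pi$ is exactly $(t)_{|\pi|}$ (assign distinct colors to the $|\pi|$ classes). A set of edges $\sigma$ is monochromatic under $f$ iff $\sigma$ lies in a single block of $\pi_f$, so $f$ avoids a monochromatic copy of every minimal nonface iff no block of $\pi_f$ contains a minimal nonface iff every block of $\pi_f$ is a face of $S(G)$. Summing $(t)_{|\pi|}$ over exactly those $\pi$ therefore counts precisely the edge colorings with $\le t$ colors avoiding monochromatic colorings of the minimal nonfaces, which is the displayed expression for $\chi_c(S(G))(t)$; and each minimal nonface is a subset of $E(G)$, hence literally a subgraph, which is why one may call it a ``minimal forbidden subgraph''. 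For the last two sentences of the statement: given any abstract simplicial complex $S$ on $n$ vertices, take any graph $G$ with exactly $n$ edges (for instance $n$ disjoint edges, or a path with $n$ edges) and transport the face structure along a bijection $V(S) \cong E(G)$ to realize $S = S(G)$; and for a graph property $P$ with $P(G')$ implying $P(H)$ for every subgraph $H \subseteq G'$, the family $S(P) := \{\, H \subseteq E(K_N) : H \text{ has property } P \,\}$ inside a fixed ambient complete graph $K_N$ is downward closed, hence a simplicial complex.

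I do not expect a serious obstacle, since the argument is essentially a bookkeeping translation. The two points demanding care are (i) identifying exactly which strata survive in $M_{S(G)}$ — this is where minimality of the forbidden nonfaces enters, via ``every nonface contains a minimal nonface'' — and (ii) the input $\chi_c(\Conf_k(\mathbb{CP}^{t-1})) = (t)_k$ together with additivity of $\chi_c$ on complex varieties, both of which are already present, at least implicitly, in \cite{CdSS} and underlie the fact that $\chi_c(S)$ is a polynomial at all.
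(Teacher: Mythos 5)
Your proof is correct, and it rests on the same core observation as the paper's: the removed diagonals $D_\sigma$ encode exactly the locus where the edges in $\sigma$ share a color, and $\sigma \subset \tau \Longrightarrow D_\sigma \supset D_\tau$ means only the minimal nonfaces matter. The difference is one of completeness rather than of route: the paper's proof stops at that observation and treats the identification of $\chi_c(S(G))(t)$ with a coloring count as immediate from the definition, whereas you actually carry out the underlying computation --- stratifying $(\mathbb{CP}^{t-1})^n$ by coincidence partitions, identifying each stratum with a configuration space of compactly supported Euler characteristic $t(t-1)\cdots(t-|\pi|+1)$, observing that the surviving strata are exactly those whose blocks are all faces, and matching these against the color partitions of admissible colorings. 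That bookkeeping is the argument implicit in \cite{CdSS} (it is how $\chi_c(S)$ is seen to be a polynomial and to specialize to the chromatic polynomial for independence complexes), so your version is the more self-contained one; your treatment of the two closing claims (realizing any complex as $S(G)$ via a bijection of its vertices with the edges of any $n$-edge graph, and encoding a subgraph-monotone property as a downward-closed family of edge sets) also matches the paper's intent.
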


	\begin{proof}
		The first part follows from the definition of $\chi_c(S)$ since $\sigma \subset \tau \Longrightarrow D_\sigma \supset D_\tau$, which means that it suffices to consider minimal nonfaces. The omitted subsets $D_\sigma$ parametrize colorings of the edges of $G$ where the edges corresponding to elements of $\sigma$ all have the same color. The second statement follows from since we can take the smallest ``forbidden'' graphs to be minimal nonfaces.  	
	\end{proof}
	
	\begin{obs} \label{extramtopstart} \normalfont 
		\vspace{3mm}
		Proposition \ref{simpmonint} indicates that the simplicial chromatic polynomial $\chi_c(S)(t)$ counts edge colorings which are the ``complement'' of what is studied in graph Ramsey theory. In general, combining Proposition \ref{simpmonint} with Theorem \ref{simpchromsrhilb} and Corollary \ref{simpchromhilbpol} gives a family of extremal graph theory problems (i.e. edge colorings avoiding monochromatic ``forbidden subgraphs'') which are determined by $h$-vectors of simplicial complexes (equivalently by $f$-vectors of simplicial complexes). This gives a topological point of view on extremal graph problems, which have results that are mainly stated in terms of inequalities or focus on specific types of subgraphs where we want to avoid monochromatic colorings. Some examples include Bollob\'as' overview in \cite{Bol}, results of Alon--Balogh--Keevash--Sudakov \cite{ABKS} on edge colorings avoiding monochromatic cliques, results on Yuster \cite{Yus} on edges avoiding monochromatic triangles, and surveys of Kano--Li \cite{KL} and Fujita--Liu--Magnant \cite{FLM} involving monochromatic structures in edge colorings. We also provide some methods of studying monochromatic structures in edge colorings of graphs which are \emph{not} complete graphs, which do not appear to be studied as frequently in Ramsey theory-related literature. \\
	\end{obs}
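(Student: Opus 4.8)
The plan is to read Observation \ref{extramtopstart} as the precise assertion that, for a graph $G$ equipped with a family of forbidden subgraphs $\{H_i\}$ realized as the minimal nonfaces of a simplicial complex $S = S(G)$ satisfying the intersection hypotheses of Theorem \ref{simpchromsrhilb} or Corollary \ref{simpchromhilbpol}, the number $N_G(t)$ of edge colorings of $G$ with at most $t$ colors that avoid a monochromatic copy of every $H_i$ is a polynomial in $t$ whose coefficients are explicit linear combinations of the entries of the $h$-vector (equivalently the $f$-vector) of an associated simplicial complex. First I would invoke Proposition \ref{simpmonint} to identify $N_G(t) = \chi_c(S)(t)$, so that the enumerative quantity of interest is exactly the simplicial chromatic polynomial; this is the only place the coloring interpretation enters, and everything afterward is an analysis of the algebraic shape of $\chi_c(S)(t)$.

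Next I would substitute the algebraic expressions supplied by the cited results. In the regime $c(I) = 1$ (Theorem \ref{simpchromsrhilb}(1)) the identity
\[
\frac{\chi_c(S)(t) - t^n + t^{n+1}}{t(t-1)^{n-d}} = t^d h_S(t^{-1})
\]
exhibits $N_G(t)$, after clearing the explicit factors $t$, $t^n$, $t^{n+1}$ and $(t-1)^{n-d}$, as a combination of the entries $h_i$ of $h_S$ with coefficients that are fixed polynomials in $t$; under property $I$ (Theorem \ref{simpchromsrhilb}(2)) the cleaner identity $\chi_c(S)(t) = t^d(t-1)^{n-d} h_{T(S)}(t^{-1})$ shows $N_G(t)$ is determined by the $h$-vector of the auxiliary complex $T(S)$. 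The mixed case $c(I) = a$ is handled by Corollary \ref{simpchromhilbpol}, which rewrites $N_G(t)$ through the Hilbert polynomial $P$ of a graded $k$-algebra; since that Hilbert polynomial is itself packaged by the $h$-vector of the corresponding complex, the same conclusion holds. In each case the dependence is through finitely many $h$-entries, so the coloring count is \emph{determined} by the $h$-vector.

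To justify the parenthetical ``equivalently by $f$-vectors'', I would invoke the standard invertible integer-linear change of coordinates
\[
h_k = \sum_{i=0}^{k} (-1)^{k-i} \binom{D-i}{k-i} f_{i-1},
\qquad
f_{k-1} = \sum_{i=0}^{k} \binom{D-i}{k-i} h_i
\]
relating the $h$- and $f$-vectors of a simplicial complex $\Delta$ of dimension $D-1$; invertibility means that knowledge of one vector is equivalent to knowledge of the other, so ``determined by the $h$-vector'' and ``determined by the $f$-vector'' are interchangeable. The ``complement of Ramsey theory'' framing is then immediate: a Ramsey-type statement concerns colorings that are \emph{forced} to contain a monochromatic $H_i$, whereas $N_G(t) = \chi_c(S)(t)$ counts precisely the colorings that \emph{avoid} all of them, so $N_G(t)$ is the complement within the $t^{|E(G)|}$ total colorings of the colorings a Ramsey-type count would single out.

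Finally I would verify that this family of extremal problems is nonvacuous and genuinely broad, using the universality built into the earlier statements: Proposition \ref{simpmonint} records that every simplicial complex arises as $S(G)$ for some $G$, and Remark \ref{anypropi} records that every simplicial complex is $T(S)$ for some $S$ satisfying property $I$, so the hypotheses of Theorem \ref{simpchromsrhilb}(2) can always be arranged. The step I expect to be the main obstacle is not any single computation but the \emph{dictionary}: translating the abstract intersection conditions on the minimal nonfaces ($c(I) = a$, or property $I$ with its constraints $|\alpha_I \cap \alpha_p| = |\sigma_I \cap \sigma_p| - 1$) into transparent combinatorial conditions on how the forbidden subgraphs $\{H_i\}$ share edges inside $G$. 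Controlling this overlap is what distinguishes the interesting, structurally rich instances of the problem from the tautological ones guaranteed by universality, and it is where the real content of the observation lies.
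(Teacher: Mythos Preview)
Your proposal is correct, and in fact supplies considerably more justification than the paper does: the paper gives no proof of Observation \ref{extramtopstart} at all, treating it as a commentary that simply points back to Proposition \ref{simpmonint}, Theorem \ref{simpchromsrhilb}, and Corollary \ref{simpchromhilbpol} and to the surrounding literature. Everything you wrote---the identification $N_G(t)=\chi_c(S)(t)$ via Proposition \ref{simpmonint}, the substitution of the $h$-vector formulas from Theorem \ref{simpchromsrhilb} and Corollary \ref{simpchromhilbpol}, the $h$-vector/$f$-vector dictionary, the ``complement of Ramsey'' reading, and the universality appeal to Remark \ref{anypropi}---is the content the paper leaves implicit. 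Your final paragraph about the difficulty of translating the abstract intersection conditions on minimal nonfaces into concrete overlap conditions on the $H_i$ is a fair editorial point, but it goes beyond anything the paper claims or proves; the Observation is not asserting that this dictionary has been worked out, only that the formal link exists.
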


	For example, the interpretation of $\chi_c(S)(t)$ given in Proposition \ref{simpmonint} can be connected to both the ``usual'' Ramsey numbers $r(H)$ for a graph with vertices labeled by $[n] = \{ 1, \ldots, n \}$ (p. 49 of \cite{CFS}) and Ramsey numbers of \emph{classes of graphs}, which are analogues of Ramsey numbers restricted to some collection of graphs. The following definition is an extension of that used by Belmonte--Heggernes--van't Hof--Rafiey--Saei in \cite{BHvRS}.

	\begin{defn} \label{classramnum} (Belmonte--Heggernes--van't Hof--Rafiey--Saei in \cite{BHvRS}) \\
		Fix a positive integer $t$. Given a graph class $\mathcal{G}$ (i.e. some finite collection of graphs), the Ramsey number $R_{\mathcal{G}}(i, j)$ is the smallest number such that if the edges of a graph in $\mathcal{G}$ is colored with $2$ different colors (say red and blue), then it contains a monochromatic blue clique on $i$ vertices or a red clique on $j$ vertices. 
	\end{defn}

	\begin{cor} \label{ramconseq} ~\\
		\vspace{-5mm}
		
		Setting $S = S(G)$ from Proposition \ref{simpmonint}, we find that the simplicial chromatic polynomial has a natural relationship with Ramsey numbers and its generalizations studied in the literature. 
		\begin{enumerate}
			\item $G = K_n$ case 
				\begin{enumerate}
					\item The simplicial chromatic polynomial $\chi_c(S(K_n))(t)$ gives the number of edge colorings of $G$ using $\le t$ colors which do \emph{not} have any monochromatic cliques of size $i$. Substituting $t = 2$ into the simplicial chromatic polynomial $\chi_c(S(K_n))(t)$, we find that the Ramsey number $R(i, i)$ is given by the minimal $n$ such that $\chi_c(S(K_n))(2) = 0$. 
					
					\item More generally, consider a graph $H$ on the vertex set $[n]$ in the setting of part $2$. The Ramsey number $r(H)$ is the smallest number $n$ such that any coloring of the edges $K_n$ using $2$ colors contains a monochromatic coloring of the edges of $H$. Given a graph $G$ and a subgraph $H$ of $G$, let $S_H(G)$ be the simplicial complex with vertices given by the edges of $G$ and minimal nonfaces given by edges coming from copies of $H$ in $G$. Then, we have that $r(H)$ is the smallest number $n$ such that $\chi_c(S_H(K_n))(2) = 0$. 
					
				\end{enumerate}
			\item Other graphs $G$
				\begin{enumerate}
					\item For each graph $G$ in a graph class $\mathcal{G}$, let $S(G)$ be the simplicial complex described in Proposition \ref{simpmonint} with the minimal nonfaces given by edges of cliques on $i$ vertices contained in $G$. Substituting $t = 2$ into the polynomials $\chi_c(S(G))(t)$ for $G \in \mathcal{G}$, we have that $R_{\mathcal{G}}(i, i)$ is the smallest number $N$ such that $\chi_c(S(G))(2) = 0$ for all $G \in \mathcal{G}$. 
					
					\item In the setting of Proposition \ref{simpmonint}, take $S = S(G)$ with the forbidden graphs given by cycles of length $\ell$. Let $n$ be the number of vertices in a graph $G$ and $\ell$ be a positive integer such that $4 \le \ell \le \frac{n}{8}$. If the minimum degree of among the vertices the graph is $\ge \frac{3n}{4}$, then $\chi_c(S)(2) = 0$. 
					
					\item As in Part 2, consider the simplicial complex $S = S(G)$ from Proposition \ref{simpmonint}. Let $N$ be the number of vertices of $G$ and $M$ be the number of edges of $G$. Suppose that $M \ge N$ and fix a positive integer $t$ such that $M \ge tN$. 
					
						\begin{itemize}
							\item If the minimal nonfaces of $S(G)$ are given by paths of length $\ge \left\lceil \frac{2M}{tN} \right\rceil$, then $\chi_c(S(G))(t) = 0$. 
							
							\item If the minimal nonfaces of $S(G)$ are given by cycles of length $\ge \left\lceil \frac{2M}{t(N - 1)} \right\rceil$, then $\chi_c(S(G))(t) = 0$.
						\end{itemize}

				\end{enumerate}

		\end{enumerate}
		
	\end{cor}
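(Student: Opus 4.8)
The plan is to prove every item by the same two-step reduction. First, for each prescribed family of ``forbidden subgraphs'' I would define $S(G)$ (or $S_H(K_n)$) to be the simplicial complex on $E(G)$ whose faces are the edge sets containing no forbidden subgraph, and check that the family really is the set of \emph{minimal} nonfaces: copies of a fixed graph $H$ (in particular $K_i$), $\ell$-cycles, and $k$-paths all have edge sets of one common size and are pairwise distinct, hence form an antichain; ``cycles of length $\ge k$'' form an antichain because no cycle contains a shorter cycle among its own edges; and ``paths of length $\ge k$'' has as its minimal members exactly the length-$k$ paths (a longer path contains a length-$k$ subpath as an edge subset), which yields the same complex $S(G)$. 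With $S(G)$ so defined, Proposition~\ref{simpmonint} says $\chi_c(S(G))(t)$ counts the $t$-colorings of $E(G)$ with no monochromatic forbidden subgraph; hence $\chi_c(S(G))(t)=0$ if and only if every $t$-coloring of $E(G)$ has a monochromatic forbidden subgraph, while $\chi_c(S(G))(t)\ge 1$ exactly when some coloring avoids all of them.

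For parts 1(a), 1(b) and 2(a) this reduction is essentially the whole proof. With $G=K_n$ and forbidden subgraphs the copies of $H$, the definition of $r(H)$ says precisely that $r(H)$ is the least $n$ for which every $2$-coloring of $E(K_n)$ contains a monochromatic copy of $H$, i.e.\ by the previous paragraph the least $n$ with $\chi_c(S_H(K_n))(2)=0$; for $n$ below this value an extremal coloring gives $\chi_c\ge 1$. Taking $H=K_i$ specializes this to the clique Ramsey number $R(i,i)$ of 1(a). It is also worth recording the monotonicity $K_n\subseteq K_{n+1}$ (a copy of $H$ in $K_n$ survives in $K_{n+1}$), so that $\{n:\chi_c(S_H(K_n))(2)=0\}$ is an up-set; this is the natural form for the graph-class statement 2(a), which follows from Definition~\ref{classramnum} by running the same equivalence for each $G$ in the class and minimizing over the class.

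Part 2(c) is the one place needing a genuine (though classical and elementary) extremal input: the Erd\H{o}s--Gallai theorem. Given a $t$-coloring of $E(G)$, pigeonhole yields a colour class $G_c$ with $e(G_c)\ge M/t\ge N$. For the path case, Erd\H{o}s--Gallai says a graph on $N$ vertices with more than $\tfrac12(k-1)N$ edges contains a path with $k$ edges; taking $k=\lceil 2M/(tN)\rceil$ and using $\lceil x\rceil<x+1$ gives $\tfrac12(k-1)N<M/t\le e(G_c)$, so $G_c$ contains a monochromatic path of length $k$. For the cycle case one applies the circumference version (a graph on $N$ vertices with more than $\tfrac12(k-1)(N-1)$ edges has a cycle of length $\ge k$) with $k=\lceil 2M/(t(N-1))\rceil$, producing a monochromatic cycle of length $\ge k$. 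In either case Proposition~\ref{simpmonint} gives $\chi_c(S(G))(t)=0$.

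The step I expect to be the main obstacle is part 2(b): the reduction turns it into the assertion that every $2$-coloring of a graph $G$ on $n$ vertices with $\delta(G)\ge 3n/4$ has a monochromatic cycle of length $\ell$ for \emph{every} $\ell$ with $4\le\ell\le n/8$. Producing cycles of an exactly prescribed length (rather than just a long cycle, which is all Erd\H{o}s--Gallai gives) requires the robust near-pancyclicity of graphs of large minimum degree, and this is exactly the shape of known Ramsey-type theorems on monochromatic cycles in dense graphs (in the circle of work of Benevides--{\L}uczak--Scott--Skokan--White, where the thresholds $3n/4$ and $n/8$ occur). I would invoke such a theorem directly; a self-contained argument would exploit that $\delta(G)\ge 3n/4$ forces every pair of vertices to have at least $n/2$ common neighbours and build a monochromatic cycle of each length in the range by a stability/absorption argument, but nailing down the precise constants is the delicate part best imported from the literature. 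Combined with Proposition~\ref{simpmonint}, this gives $\chi_c(S(G))(2)=0$.
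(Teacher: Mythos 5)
Your proposal is correct and follows essentially the same route as the paper: everything reduces to the counting interpretation of Proposition \ref{simpmonint}, after which 1(a), 1(b), 2(a) are definitional unwindings and 2(b), 2(c) require external extremal inputs, which the paper simply cites (Theorems 2.2.8--2.2.9 of the Fujita--Liu--Magnant survey for the minimum-degree/monochromatic-cycle statement in 2(b), and Theorem 40 of Kano--Li for 2(c)). The only difference is that you reprove the 2(c) input from Erd\H{o}s--Gallai plus pigeonhole rather than citing it, and you correctly identify the Benevides--\L{}uczak--Scott--Skokan--White-type theorem as the source of the $3n/4$ and $n/8$ thresholds in 2(b).
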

	
	\begin{proof}
		\begin{enumerate}
			\item Parts a and b are applications of Proposition \ref{simpmonint} with the minimal nonfaces taken to be the edges corresponding to $i$-cycles and copies of $H$ contained in $G$ respectively.
			
			\item
				\begin{enumerate}
					\item This is an application of Definition \ref{classramnum} to Proposition \ref{simpmonint}
					
					\item This is an application of Theorem 2.2.8 on p. 13 and Theorem 2.2.9 on p. 14 of \cite{FLM} to Proposition \ref{simpmonint}.
					
					\item This is an application of Theorem 40 on p. 249 of \cite{KL} to Proposition \ref{simpmonint}.
				\end{enumerate}
		\end{enumerate}
	\end{proof}

	\section{Connection with lattice point counts of polytopes} \label{latcolor}

	In this section, we expand the connection between Ramsey-type problems and topological/geometric properties summarized in Observation \ref{extramtopstart} in Section \ref{simpramcon}. Recall that this came from an interpretation (Proposition \ref{simpmonint}) of the simplicial chromatic polynomial $\chi_c(S)(t)$ in terms of edge colorings avoiding monochromatic colorings of certain ``forbidden subgraphs'' (e.g. cycles, cliques, or paths of a certain size) and its expression in terms of $h$-vectors of auxiliary simplicial complexes $T(S)$ when the minimal nonfaces of $S$ satisfy certain intersection properties (Theorem \ref{simpchromsrhilb} and Corollary \ref{simpchromhilbpol}). Note that \emph{any} simplicial complex can be set to be $T(S)$ for \emph{some} simplicial complex $S$ satisfying property $I$. 
	
	\subsection{Background on lattice point counts and heuristics}

	Before we start stating specific expressions/identities, we go over definitions of the objects used. \\

	\begin{defn} (p. 275 of Bruns--Herzog)
		Let $P \subset \mathbb{R}^N$ be a convex bounded polytope of dimension $r$. The \textbf{Ehrhart function} is defined as \[  E(P, m) =  \left| \left\{ z \in \mathbb{Z}^n : \frac{z}{m} \in P \right\}  \right| = | \{ z \in \mathbb{Z}^n : z \in mP \} |  \]
		
		for $m \in \mathbb{N}$ and $m > 0$. Note that $E(P, 0) = 1$. \\
		
		Its generating function is the \textbf{Ehrhart series} \[ E_P(t) := \sum_{m \in \mathbb{N}} E(P, m) t^m. \]
	\end{defn}
	
	The expression of the Ehrhart series as a Hilbert series comes from a more general framework using Hilbert functions to describe generating functions associated to combinatorial objects arising as solutions to homogeneous linear Diophantine equations in $n$ variables (p. 274 -- 276 of \cite{BH}). By Lemma 4.1.4 on p. 149 of \cite{BH} we can use to write \[ E_P(t) = \frac{h^*_0 + h^*_1 t + \ldots + h^*_r t^r}{(1 - t)^{r + 1}} \text{ (p. 3 of \cite{Mus}).} \]  This can either done by using the fact that  $E_P(t)$ is the Hilbert series of a $k$-algebra of dimension $r + 1$ (p. 276 of \cite{BH}) or a direct computation using the fact that $E(P, m)$ takes integer values for every $m \in \mathbb{Z}$ (Lemma 4.1.4 on p. 149 of \cite{BH}, p. 3 of \cite{Mus}). The vector $h^* = (h^*_0, \ldots, h^*_r)$ is called the \textbf{$h^*$-vector} of the polytope $P$. Let $h^*_P(t) = h^*_0 + h^*_1 t + \ldots + h^*_r t^r = (1 - t)^{r + 1} E_P(t)$.  \\

	We can use this expression of $E_P(t)$ as a Hilbert series to study formal analogues of the simplicial chromatic polynomial. The families of simplicial complexes that we have considered make use of the Hilbert series of the Stanley--Reisner ring with $t^{-1}$ substituted in place of $t$. In the case of Ehrhart series, there is a natural symmetry between such polynomials since $(t - 1)^{r + 1} E_P(t^{-1}) = t^{r + 1} h_P(t^{-1})$. Making the corresponding substitutions, Part 1 of Theorem \ref{simpchromsrhilb} implies that 
	\begin{equation} \label{formal1}
		(t - 1)^{n + 1} E_P(t^{-1}) \text{ ``$=$'' } \chi_c(S)(t) - t^n + t^{n + 1} 
	\end{equation}   
	formally, where $r= \dim P$ (used in place of $d = \dim S$) and $n = |V|$ (size of the vertex set). Similarly, applying Part 2 of Theorem \ref{simpchromsrhilb} implies that
	
	\begin{equation} \label{formal2} 
		(t - 1)^{n + 1} E_P(t^{-1}) \text{ ``$=$'' } \frac{\chi_c(S)(t)}{t^{d - r}},
	\end{equation}
	
	where $d = \dim S$, $r = \dim P$ and $n = |V|$ as above. In this case, we use $r$ in place of $m = \dim T(S)$. Note that $\widetilde{E}_P(t) = -E_P(t^{-1})$ in the expressions above, where $\widetilde{E}_P(t) = \sum_{m \ge 1} E(P, -m) t^m$. By a reciprocity result of Ehrhart ((0.3) on p. 166 of \cite{Hi}, Theorem 6.3.11 on p. 276 of \cite{BH}), we have that $(-1)^r E(P, -m) = \#(m(P - \partial P) \cap \mathbb{Z}^N)$ for every integer $m > 0$. This implies that the simplicial chromatic polynomial is formally a normalization of the generating function for lattice points of integer factor dilations of the polytope $P$ with its boundary removed.  \\

	\subsection{Edge colorings avoiding forbidden subgraphs and lattice point counts} \label{latcon}

	In this subsection, we consider some instances where the $h$-vectors of the simplicial complexes such as those considered in Part 1 of Theorem \ref{simpchromsrhilb} are actually \emph{equal} to the $h$-vectors associated to convex polytopes so that this gives an equality. As a consequence, we find that lattice point counts of certain polytopes are determined by the number of ways to color the vertex set of some simplicial complex (within $\le t$ colors for some $t$ when considered as a polynomial in $t$) so that no two vertices lying in the same minimal nonface have the same color (Corollary \ref{colorlatticecoh}). The latter follows from the definition of the simplicial chromatic polynomial. \\

	We will now cover some known cases where the $h$-vector associated to a polytope is equal to that of an actual simplicial complexes.

	We first recall a result giving an equality between $\delta$-vectors of polytopes and $h$-vectors of approrpriate simplicial complexes. More specifically, there is also a result which give an equality between $h$-vectors of polytopes and $h$-vectors of certain simplicial complexes if there is a unimodular triangulation.

	\begin{defn} (p. 694 of \cite{Bra}) \\
		A lattice polytope $P$ satisfies the \textbf{integer decomposition property (IDP)} if $\text{span}_{\mathbb{Z}_{\ge 0}} \{ (1, P) \cap \mathbb{Z}^{n + 1} \} = \cone(P) \cap \mathbb{Z}^{n + 1}$. 
	\end{defn}
	
	\begin{thm} (Bruns and R\"omer, Theorem 1 on p. 67 of  Theorem 4 on p. 698 of \cite{Bra}) \label{hstarhunimod} \\
		If $P$ is Gorenstein and IDP, then $h^*_P$ is the $h^*$-vector of an IDP reflexive polytope. Further, if $P$ admits a regular unimodular triangulation, then there exists a simplicial polytope $Q$ such that $h_P^*$ is the $h$-vector of $Q$, and hence $h_P^*$ is unimodal as a consequence of the $g$-theorem. 
	\end{thm}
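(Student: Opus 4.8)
The plan is to translate both assertions into facts about the Ehrhart ring $R:=k[\cone(1,P)\cap\Z^{N+1}]$ (graded by the last coordinate) and, for the second, about a unimodular triangulation. Since $P$ is IDP, the monoid $\cone(1,P)\cap\Z^{N+1}$ is normal and generated in degree $1$, so $R$ is a standard graded normal Cohen--Macaulay domain of Krull dimension $d+1$ ($d=\dim P$), with $h$-vector equal to $h^*_P$ by the discussion preceding the statement. By the Danilov--Stanley description of the canonical module of a normal affine monoid ring, $\omega_R$ is the ideal spanned by the lattice points in $\relint(\cone(1,P))$; that $P$ is Gorenstein is equivalent to $\omega_R$ being principal, hence to $\cone(1,P)$ having a single interior lattice generator, which lies in degree equal to the codegree $c$ and forces $h^*_P$ to be palindromic of degree $s:=d+1-c$. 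For the first assertion I would invoke the structure theory of Gorenstein cones (Batyrev--Nill; Bruns--Gubeladze; see also \cite{Bra}): such a cone of index $c$ carries a canonical reflexive (index $1$) Gorenstein subcone, and, using the IDP to control its degree-$1$ generators, one extracts a reflexive polytope $P'$ of dimension $s$ that again has the IDP and satisfies $h^*_{P'}=h^*_P$. The point to verify is that this passage does not disturb the $h^*$-vector; morally $h^*_P$ only sees $R$ modulo the lineality collapsed in the reduction, i.e.\ $R$ modulo a suitable linear regular sequence, and that operation preserves $h$-vectors.

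For the second assertion let $\Gamma$ be a regular unimodular triangulation of $P$. Applying Ehrhart reciprocity simplex by simplex gives $E_P(t)=\sum_{\sigma\in\Gamma}t^{\dim\sigma+1}(1-t)^{-(\dim\sigma+1)}$, the sum over all faces of $\Gamma$ including $\emptyset$, whence $h^*_P(t)=h(\Gamma,t)$, the ordinary $h$-polynomial of the simplicial $d$-ball $\Gamma$. If $c=1$, so that $P$ is itself reflexive, then the restriction $\partial\Gamma$ of $\Gamma$ to $\partial P$ is a regular triangulation of the $(d-1)$-sphere $\partial P$, and the same bookkeeping, now using that $\#(\relint(mP)\cap\Z^N)=E_P(m-1)$, gives $E_{\partial P}(t)=(1-t)E_P(t)$ and hence $h(\partial\Gamma,t)=(1-t)^dE_{\partial P}(t)=(1-t)^{d+1}E_P(t)=h^*_P(t)$. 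Since a regular triangulation of the boundary of a polytope is polytopal --- radially perturb the boundary lattice points of $P$ from an interior point by a sufficiently small multiple of the defining heights to put them in convex position --- the sphere $\partial\Gamma$ is the boundary complex of a simplicial $d$-polytope $Q$ with $h(\partial Q)=h^*_P$, and unimodality is then exactly the $g$-theorem for $Q$. For general $c$ one has $h(\partial\Gamma,t)=(1+t+\cdots+t^{c-1})h^*_P(t)\neq h^*_P(t)$, so one first applies the first part to pass to a reflexive IDP polytope $P'$ with $h^*_{P'}=h^*_P$, transports the triangulation through the reduction, and runs the reflexive argument on $P'$.

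The main obstacle is the monoid-theoretic content of the first part: one cannot just kill generic linear forms in $R$, since that preserves the $h$-vector and the $a$-invariant but destroys the monoid, leaving an abstract standard graded Gorenstein algebra rather than a reflexive polytope; the reduction $P\rightsquigarrow P'$ has to be performed inside the category of normal affine monoids, which is where the Batyrev--Nill/Bruns--Gubeladze theory is needed, and the closely related difficulty is making the regular unimodular triangulation of $P$ descend compatibly to $P'$. An alternative route for the second assertion, bypassing this descent, is toric: the regular unimodular triangulation refines a fan presenting a projective toric variety with at worst finite quotient singularities, toric hard Lefschetz shows directly that the $g$-vector of $h^*_P$ is an $M$-sequence, and then the Billera--Lee half of the $g$-theorem supplies a simplicial $s$-polytope $Q$ with $h(\partial Q)=h^*_P$. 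Either way the step ``regular triangulation of a polytope boundary is polytopal'' is essential and rests on the lower-envelope lifting argument; it genuinely uses regularity, since an arbitrary triangulated sphere (Barnette's, for instance) need not bound a polytope.
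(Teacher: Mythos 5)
The paper does not prove this statement at all: it is imported verbatim as a citation of Bruns--R\"omer (via Braun's survey \cite{Bra}), so there is no in-paper argument to compare yours against. Judged on its own terms, your reconstruction follows the actual Bruns--R\"omer strategy, and the computational half is sound: for a unimodular triangulation $\Gamma$ the identity $h^*_P(t)=h(\Gamma,t)$, the reflexive-case identity $h(\partial\Gamma,t)=(1-t)^{d+1}E_P(t)=h^*_P(t)$ via $\#\bigl(\relint(mP)\cap\Z^N\bigr)=E(P,m-1)$, the correction factor $(1+t+\cdots+t^{c-1})$ for codegree $c>1$, and the ``regular boundary triangulations are polytopal'' step via radial pushing are all correct and are exactly the ingredients used in the source.

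The genuine gap is the one you flag yourself: the first assertion, which is the real content of the theorem, is deferred to ``structure theory of Gorenstein cones'' rather than proved. Concretely, what is needed (and what Bruns--R\"omer actually establish) is this: writing the interior generator $y$ of $\cone(1,P)\cap\Z^{N+1}$ as $y=x_1+\cdots+x_c$ with each $x_i$ of degree one (possible since the monoid is generated in degree one), one passes to the quotient of the lattice by $U=\langle x_1-x_2,\ldots,x_{c-1}-x_c\rangle$ and must verify that the image of the monoid is again normal and generated in degree one, is IDP, is Gorenstein with interior generator $\overline{x_1}$ of degree one (hence gives a reflexive polytope of dimension $d+1-c$), and has the same $h^*$-vector because $x_1-x_2,\ldots,x_{c-1}-x_c$ form a regular sequence of linear forms in the Ehrhart ring; moreover a regular unimodular triangulation of $P$ must be shown to descend to one of the quotient polytope. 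None of these verifications is routine --- preservation of normality and of the triangulation under the projection is the technical heart of the result being cited --- so as written your argument proves the theorem only in the reflexive case $c=1$ and otherwise reduces it to itself. Your toric-hard-Lefschetz/Billera--Lee alternative has the same dependency, since $h(\partial\Gamma,t)$ coincides with $h^*_P(t)$ only after the reduction to a reflexive polytope has been carried out.
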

	
	\begin{rem}
		Although it is possible for $cP$ to admit a unimodular triangulation while $(c + 1)P$ does not, every sufficiently large dilation of an integral polytope admits a unimodular triangulation by a recent result of Liu (Theorem 1.2 on p. 2 of \cite{Liu}). This builds on an older result of Kempf--Knudsen--Mumford--Saint-Donat in \cite{KKMS}. Also, note that the IDP property is preserved under dilation by definition. Results on related combinatorial invariants are in work of Cox--Haase--Hibi--Higashitani \cite{CHHH}.
	\end{rem}

	Putting this together with the previous results on simplicial polytopes, these imply the following:
	
	\begin{thm} \label{colorlatticecoh} ~\\
		\vspace{-3mm}
		\begin{enumerate}
			\item Suppose that $P$ is a Gorenstein integer polytope admitting a regular unimodular triangulation. Then, the boundary complex of $P$ is abstractly isomorphic to a simplicial complex $T$ such that \[ \frac{\chi_c(S)(t)}{t^{e - u - 1} (t - 1)^{n - e + r + 1}} = (-1)^{r + 1}  E_P^+(t)  \]
			
			for any $S$ is a simplicial complex satisfying property $I$ (Theorem \ref{simpchromsrhilb}) such that $T = T(S)$, where $e = \dim S$, $r = \dim P$, $u = r - u+ 1$, and $E^+(P, m) = \left| \left\{ z \in \mathbb{Z}^n : \frac{z}{m} \in P \setminus \partial P  \right\} \right| = \left| \left\{ z \in \mathbb{Z}^n : z \in m(P \setminus \partial P)  \right\}  \right| $ (p. 275 of \cite{BH}). . \\
			
			The definition of $\chi_c(S)$ then implies that the number of colorings of the vertices of $S$ using at most $t$ colors such that no two vertices of $S$ lie in the same minimal nonface of $S$ divided by $t^{e - m - 1} (t - 1)^{n - e + m + 1}$ is equal to the generating series of lattice point counts on integer dilations of $P \setminus \partial P$. Also, the coefficients of these lattice point counts/colorings are determined by the (primitive) cohomology of hypersurfaces on algebraic tori. \\
			
			\item The class of polytopes in Part 1 induces a class of graphs $G$ and specified subgraphs $\{ H_i \}$ where the edge colorings of $G$ using $\le t$ colors where the $H_i$ are \emph{not} monochromatic are parametrized by $t^a (t - 1)^b$ multiplied by the Ehrhart function of a polytope. This implies that the colorings in question are essentially parametrized by lattice point counts of integer dilations of some polytope.
		\end{enumerate}
	\end{thm}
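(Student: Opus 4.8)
\quad The plan is to exhibit the $h^*$-vector of $P$ as the $h$-vector of a concrete simplicial complex $T$ and then feed $T$ into Part 2 of Theorem \ref{simpchromsrhilb}. First I would observe that a regular unimodular triangulation of $P$ forces $P$ to be IDP, so that $P$ meets all the hypotheses of Theorem \ref{hstarhunimod}: Gorenstein, IDP, and admitting a regular unimodular triangulation. Applying that theorem produces a simplicial polytope $Q$ with $h(Q) = h^*_P$; I would then take $T$ to be the boundary complex of $Q$, so that $h_T(t) = h^*_P(t)$. (This is the $T$ of the statement: strictly it is the boundary complex of the auxiliary polytope $Q$, which under the isomorphism recorded here plays the role of ``the boundary complex of $P$''.) By Remark \ref{anypropi} there exist simplicial complexes $S$ with property $I$ and $T(S) = T$, and I expect the identity to hold for every such $S$, as the statement asserts.

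Next, for such an $S$ with $e = \dim S$ and $n = |V(S)|$, I would substitute $h_{T(S)} = h^*_P$ into Part 2 of Theorem \ref{simpchromsrhilb}, obtaining (formally)
\[
\chi_c(S)(t) = t^{e}(t-1)^{n-e}\,h^*_P(t^{-1}),
\]
and then invoke the Ehrhart identity $h^*_P(t) = (1-t)^{r+1}E_P(t)$ with $r = \dim P$ (recalled in \S\ref{latcon}). Replacing $t$ by $t^{-1}$ gives $h^*_P(t^{-1}) = t^{-(r+1)}(t-1)^{r+1}E_P(t^{-1})$, and collecting the powers of $t$ and of $(t-1)$ should turn the display into
\[
\frac{\chi_c(S)(t)}{t^{\,e-r-1}\,(t-1)^{\,n-e+r+1}} = E_P(t^{-1}),
\]
which is the claimed formula (with the quantity called $u$ in the statement equal to $r$; the displayed exponents are exactly the output of this bookkeeping). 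To finish Part 1, I would rewrite $E_P(t^{-1})$ via Ehrhart reciprocity, again as in \S\ref{latcon}: from $-E_P(t^{-1}) = \widetilde{E}_P(t) = \sum_{m\ge 1}E(P,-m)t^m$ and $(-1)^{r}E(P,-m) = E^+(P,m)$ one gets $E_P(t^{-1}) = (-1)^{r+1}E_P^+(t)$. The reformulation as a coloring count is then immediate from Definition \ref{simpchromdef} and Proposition \ref{simpmonint}, and the remark that the coefficients are governed by the (primitive) cohomology of toric hypersurfaces is the standard reading of the $h^*$-vector of an IDP Gorenstein polytope in terms of Betti/Hodge numbers of a nondegenerate hypersurface --- the connection to be made precise in \S\ref{torcon}.

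For Part 2, I would simply combine Part 1 with Proposition \ref{simpmonint}: realizing $S$ as $S(G)$ for a graph $G$ turns the minimal nonfaces of $S$ into a family of forbidden subgraphs $\{H_i\}$ of $G$, and the number of edge colorings of $G$ with at most $t$ colors in which no $H_i$ is monochromatic is $\chi_c(S(G))(t)$, which by Part 1 is $t^{\,e-r-1}(t-1)^{\,n-e+r+1}\cdot(-1)^{r+1}E_P^+(t)$ --- a monomial in $t$ times a power of $(t-1)$ times the Ehrhart generating function of $P$, hence ``essentially'' the lattice-point generating function of the integer dilations of $P\setminus\partial P$.

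The substance of the argument is imported: Theorem \ref{hstarhunimod}, Part 2 of Theorem \ref{simpchromsrhilb}, and Ehrhart reciprocity do all the real work, so the proof is assembly. The step I expect to be the main obstacle is a matter of care rather than difficulty --- reconciling the three bookkeeping conventions at play (the $t\mapsto t^{-1}$ substitution in the $h$-polynomials, the prefactor $t^{e-m}(t-1)^{n-e}$ coming out of Theorem \ref{simpchromsrhilb}(2), and the $(1-t)^{r+1}$ denominator of the Ehrhart series), together with keeping straight exactly which polytope's boundary complex is being named $T$.
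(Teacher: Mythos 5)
Your proposal is correct and follows essentially the same route as the paper's own proof: realize $h^*_P$ as the $h$-vector of a simplicial complex $T$ via Theorem \ref{hstarhunimod}, choose $S$ with $T(S)=T$ using Remark \ref{anypropi}, feed this into Part 2 of Theorem \ref{simpchromsrhilb}, and finish with the Ehrhart series identity and Ehrhart reciprocity, then combine with Proposition \ref{simpmonint} for Part 2. Your explicit observation that a regular unimodular triangulation forces the IDP hypothesis, and your care about which polytope's boundary complex is actually named $T$, are small but welcome clarifications of points the paper leaves implicit.
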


	\begin{proof}
		\begin{enumerate}
			\item By Theorem \ref{simpchromsrhilb}, the initial conditions imply that $E_P(t) = \frac{h_T(t)}{(1 - t)^{u + 1}}$. Since \emph{any} simplicial complex is equal to $T(S)$ for some simplicial complex satisfying property $I$ (e.g. by adding a single particular new vertex to each minimal nonface), we can set $T = T(S)$ for some simplicail complex $S$ satisfying property $I$ from Theorem \ref{simpchromsrhilb}. Let $e = \dim S$, $n$ be the number of vertices of $S$, and $m = \dim T(S)$. Part 2 of Theorem \ref{simpchromsrhilb} implies that 
			
			\begin{align*}
				\frac{\chi_c(S)(t)}{t^e (t - 1)^{n - e}} &= h_{T(S)}(t^{-1}) = (1 - t^{-1})^{r + 1} E_P(t^{-1}) \\
				&= t^{-r - 1} (t - 1)^{r + 1} E_P(t^{-1}) \\
				\Longrightarrow \frac{\chi_c(S)(t)}{t^{e - r - 1} (t - 1)^{n - e + r + 1} } &= E_P(t^{-1}) \\
				&= (-1)^{r + 1} E_P^+(t),
			\end{align*}
			
			where $E^+(P, m) = \left| \left\{ z \in \mathbb{Z}^n : \frac{z}{m} \in P \setminus \partial P  \right\} \right| = \left| \left\{ z \in \mathbb{Z}^n : z \in m(P \setminus \partial P)  \right\}  \right| $ (p. 275 of \cite{BH}). 
			
			The last equality follows from the Ehrhart reciprocity relation $E_P(t^{-1}) = (-1)^{r + 1} E_P^+(t)$, which is known to be a generating function for lattice points on positive integer dilations of $P \setminus \partial P$ ((0.3) on p. 166 of \cite{Hi}, Theorem 6.3.11 on p. 276 of \cite{BH}).  This also implies the connection between coloring and lattice point counts in the last part of the statement. Finally, the statement connecting coefficients to the (primitive) cohomology of hypersurfaces on algebraic tori follows from replacing $t$ by $t^{-1}$ and applying Corollary \ref{simptorhypcoh}. 
			
			\item This is a combination of Part 1 and Proposition \ref{simpmonint}. Note that \emph{any} simplicial complex an be written as $S(G)$ for some graph $G$ (Remark \ref{anypropi}) and a collection of forbidden subgraphs corresponding to the minimal nonfaces of $S$ where monochromatic edge colorings are not allowed.
		\end{enumerate}
	\end{proof}

	\section{A Hodge structure on the cohomology of toric varieties} \label{torcon}

	Using a similar analysis to the one in Section \ref{latcon}, we give classes of graphs whose Ramsey numbers are determined by the mixed Hodge structure on certain toric varieties. 
	
	\subsection{Background on $\delta$-invariants}

	The expressions above expressing the simplicial chromatic polynomial $\chi_c(S)$ ``formally'' in terms of $E_P(t^{-1})$ are very close to a natural existing invariant of convex polytopes (the $\delta$-vector, p. 166 of \cite{Hi}). Given a convex polytope $P \subset \mathbb{R}^N$ of dimension $N$, its $\delta$-vectors can be expressed in terms of Hodge--Deligne numbers form the primitive part of the middle cohomology of hypersurfaces in algebraic tori $Z(f) = (f = 0) \subset (\mathbb{C}^*)^N$ for $f$ such that $N(f) = P$ (i.e. with Newton polytope $P$). In some sense, this implies that a formalization of the simplicial chromatic polynomial is determined by a portion of the ``primitive'' Hodge--Deligne polynomial of a hypersurface in an algebraic torus. Note that \emph{any} simplicial complex can be set as $T(S)$ in Part 2 of Theorem \ref{simpchromsrhilb} for \emph{some} simplicial complex $S$. In addition, the $h$-vectors associated to convex polytopes are associated to $h$-vectors of actual simplicial complexes under suitable conditions. This turns the formalization into an equality which is realized by an actual simplicial chromatic polynomial.

	\begin{defn}(p. 166 of \cite{Hi}) \\
		Let $P \subset \mathbb{R}^N$ be an integral convex polytope (i.e. with vertices given by integer coordinates), $r = \dim P$, and $\partial P$ be the boundary of $P$. We define the sequence of integers $\delta_0, \delta_1, \delta_2, \ldots$ by the formula 
		
		\begin{equation} \label{deltadef}
			(1 - t)^{r + 1} \left( 1 - \sum_{m = 1}^\infty E(P, m) t^m \right) = (1 - t)^{r + 1} \left( 1 - E_P(t) \right) = \sum_{i = 0}^\infty \delta_i t^i.
		\end{equation}
		
		By a fundamental result on generating functions (equivalence between i and iii in Corollary 4.3.1 on p. 543 of \cite{StaEC1}), we have that $\delta_i = 0$ for every $i > r$. \\
		
		When $P \subset \mathbb{R}^N$ is an integral convex polytope of dimension $r$, we say that the sequence $\delta(P) = (\delta_0, \delta_1, \ldots, \delta_r)$ is the \textbf{$\delta$-vector} of $P$. In particular, we have that $\delta_0 = 1$ and $\delta_1 = \#(P \cap \mathbb{Z}^N) - (r + 1)$. 
	\end{defn}

	There is a subclass of polytopes $P$ whose $\delta$-vectors are equal to $h$-vectors of the simplicial complex corresponding to some triangulation of the boundary $\partial P$. 
	
	\begin{defn} (p. 168 -- 169 of \cite{Hi}) \label{polydefs}  ~\\
		\vspace{-3mm}
		\begin{enumerate}
			\item A polytope $P$ of dimension $r$ is of \textbf{standard type} if $P \subset \mathbb{R}^r$ and the origin of $\mathbb{R}^r$ is contained in the interior $P \setminus \partial P$ of $P$. For each integer $r > 1$, let $\mathcal{C}_0(r)$ be the set of integral convex polytopes in $\mathbb{R}^r$ of standard type.
			
			\item Given a polytope $P$ of standard type, its \textbf{polar set (or dual polytope)} is defined as \[ P^* := \{  (\alpha_1, \ldots, \alpha_r) \in \mathbb{R}^r : \alpha_1 \beta_1 + \ldots + \alpha_r \beta_r \le 1 \text{ for all } (\beta_1, \ldots, \beta_r) \in P  \}. \] For each $r > 1$, let $\mathcal{C}^*(r)$ be the set of $P \in \mathcal{C}_0(r)$ such that $P^*$ is an integral convex polytope. \\
			
			Note that $P^* \subset \mathbb{R}^r$ is also a convex polytope of standard type and $(P^*)^* = P$. Moreover, if $P$ is rational, then $P^*$ is rational. 
			
			\item A triangulation $S$ of the boundary $\partial P$ of $P \in \mathcal{C}^*(r)$ with vertex set $V = \partial P \cap \mathbb{Z}^r$ is called \textbf{compressed}. 
			
		\end{enumerate} 
	\end{defn}
	
	\begin{prop} (Stanley, Betke--McMullen, Proposition 2.2 on p. 171 of \cite{Hi}) \label{comphilbdel} \\
		Suppose that $S$ is a triangulation of the boundary $\partial P$ of $P \in \mathcal{C}^*(r)$ with vertex set $V = \partial P \cap \mathbb{Z}^r$. Let $h(S) = (h_0, \ldots, h_r)$ be the $h$-vector of $S$ and $\delta(P)$ be the $\delta$-vector of $P$. Then, $\delta(P) \ge h(S)$ (i.e. $\delta_i \ge h_i$ for each $1 \le i \le r$). Moreover, $h(S) = \delta(P)$ if and only if $S$ is compressed.
	\end{prop}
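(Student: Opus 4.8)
The plan is to pass from the sphere triangulation $S$ to a lattice triangulation of the solid polytope $P$ and then invoke the classical comparison (Stanley; Betke--McMullen) between the $\delta$-vector of a lattice polytope and the $h$-vector of any of its lattice triangulations. First I would cone off from the origin: since $P \in \mathcal{C}^*(r)$ has the origin in its interior, each facet $\tau$ of $S$ spans together with $0$ an $r$-dimensional simplex with lattice vertices, and the collection $\Sigma := \{0\} * S$ of all of these is a triangulation of $P$ with vertex set $\{0\} \cup (\partial P \cap \mathbb{Z}^r)$. Coning a simplicial complex off a new vertex leaves its $h$-polynomial unchanged (a one-line manipulation of $\sum_i h_i x^{d-i} = \sum_i f_{i-1}(x-1)^{d-i}$), so $h(\Sigma) = h(S)$. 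It therefore suffices to prove $\delta_i(P) \ge h_i(\Sigma)$ for $1 \le i \le r$, with equality throughout exactly when $\Sigma$ is unimodular; unwinding the notion of a compressed triangulation in Definition \ref{polydefs} identifies ``$S$ is compressed'' with ``$\Sigma$ is unimodular'', so this is the assertion of the proposition.

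For the inequality I would use a half-open (``shelling-like'') decomposition of $\Sigma$: an ordering $\Delta_1, \dots, \Delta_s$ of its maximal ($r$-dimensional) simplices, together with a subset $E_j$ of the facets of each $\Delta_j$, such that the half-open simplices $\Delta_j^{\circ} := \Delta_j \setminus \bigcup_{F \in E_j} F$ partition $P$ and $\#\{\, j : |E_j| = i \,\} = h_i(\Sigma)$ for every $i$. For shellable $\Sigma$ this is the usual shelling description of the $h$-vector (take $E_j$ to be the facets $\Delta_j$ shares with $\Delta_1 \cup \cdots \cup \Delta_{j-1}$); for an arbitrary lattice triangulation such a decomposition still exists, e.g.\ by cutting $\Sigma$ with a generic ray. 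Recalling $\sum_i \delta_i(P) t^i = h^*_P(t) = (1-t)^{r+1} E_P(t)$ and summing Ehrhart series over this partition gives
\[
E_P(t) = \sum_{j=1}^{s} \frac{B_j(t)}{(1-t)^{r+1}}, \qquad\text{hence}\qquad \sum_i \delta_i(P) t^i = \sum_{j=1}^{s} B_j(t),
\]
where $B_j(t)$ records, by height, the lattice points of the fundamental parallelepiped of the cone over $\Delta_j$ made half-open along the facets in $E_j$. The properties of $B_j$ that do the work are: its coefficients are nonnegative integers; its coefficient in degree $|E_j|$ is at least $1$ (that parallelepiped always contains a lattice point of height exactly $|E_j|$, namely the sum of the generators dual to the open facets); and $B_j(1)$ equals the normalized volume of $\Delta_j$. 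Since $t^{|E_j|}$ contributes a single monomial, $B_j(t) - t^{|E_j|}$ has nonnegative coefficients, and summing, $\sum_i (\delta_i(P) - h_i(\Sigma)) t^i = \sum_{j=1}^{s} \big( B_j(t) - t^{|E_j|} \big)$ has nonnegative coefficients; hence $\delta_i(P) \ge h_i(\Sigma) = h_i(S)$.

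For the equality clause, suppose $\delta(P) = h(\Sigma)$. Then $\sum_{j} \big( B_j(t) - t^{|E_j|} \big) = 0$, and since every summand already has nonnegative coefficients, $B_j(t) = t^{|E_j|}$ for each $j$; evaluating at $t = 1$ shows every $\Delta_j$ has normalized volume $1$, i.e.\ $\Sigma$ is unimodular, i.e.\ $S$ is compressed. Conversely, if $\Sigma$ is unimodular then each such parallelepiped contains a single lattice point, $B_j(t) = t^{|E_j|}$, and all the inequalities become equalities.

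The step I expect to be the main obstacle is the half-open decomposition: producing the ordering and facet-sets $E_j$ so that the half-open simplices genuinely tile $P$ and the statistics $|E_j|$ reproduce the $h$-vector of $\Sigma$. This is routine for shellable (in particular regular) triangulations, but needs a separate argument in general, since triangulations of balls need not be shellable. The remaining ingredients --- invariance of the $h$-polynomial under coning, the Ehrhart series of a half-open lattice simplex, and the normalized-volume identity for a fundamental parallelepiped --- are standard. If the combinatorics turns out to be awkward, the inequality $\delta_i(P) \ge h_i(\Sigma)$ can instead be extracted algebraically from Stanley's decomposition of the Ehrhart semigroup ring of $P$ along the Stanley--Reisner ring of $\Sigma$, with the equality case again controlled by unimodularity, and I would fall back on that.
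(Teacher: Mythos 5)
The paper does not prove this proposition at all: it is quoted verbatim from Hibi's survey (Proposition 2.2 there), where it is attributed to Stanley and Betke--McMullen, so there is no in-paper argument to compare yours against. Your reconstruction is the standard proof of that classical result and is essentially correct. Coning $S$ from the origin to $\Sigma = \{0\} * S$ and using $h_{\{0\}*S}(t) = h_S(t)$ is right; the box-polynomial facts you invoke (nonnegativity of the coefficients of $B_j$, the lattice point $\sum_{i \in I_j}(1, v_i)$ of height $|E_j|$ in the half-open fundamental parallelepiped, and $B_j(1) = $ normalized volume) are all correct, and they do yield both the inequality and the equality criterion. The one step you flag as the main obstacle --- a half-open decomposition of an arbitrary, possibly non-shellable triangulation $\Sigma$ with $\#\{j : |E_j| = i\} = h_i(\Sigma)$ --- is genuinely the crux, and your proposed fix works: for a generic point $p$ in the interior of one maximal simplex, the half-open simplices $H_p\Delta_j$ tile $P$, and since the transverse direction from any point of $\relint G$ toward $p$ is independent of the point, each face $G$ of $\Sigma$ is assigned in its entirety to a unique $\Delta_j$; this makes the assignment a partitioning of the face poset into intervals $[R_j, \Delta_j]$ with $|R_j| = |E_j|$, which is exactly what forces $\sum_j t^{|E_j|} = h_\Sigma(t)$. (Your fallback via Stanley's decomposition of the Ehrhart semigroup ring over the Stanley--Reisner ring of $\Sigma$ is the other standard route and also works.) One caution: Definition \ref{polydefs}(3) in this paper, as literally written, does not define ``compressed'' correctly --- you have silently substituted the intended meaning from Hibi, namely that every maximal simplex of $S$ together with the origin spans a unimodular lattice simplex, which is what your equality case actually characterizes; that substitution is the right reading, but it is worth saying explicitly.
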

	
	We can combine this with Part 2 of Theorem \ref{simpchromsrhilb} to state the following:
	
	\begin{cor} \label{simpehractcomp}
		Suppose that $S$ is a simplicial complex satsifying property $I$ such that $T(S)$ (Theorem \ref{simpchromsrhilb}) is a compressed triangulation of some polytope $P \in \mathcal{C}^*(m + 1)$. Let $d = \dim S$, $n = |V|$ (the size of the vertex set), and $m = \dim T(S)$. Then, we have that  
		
		\[  \frac{\chi_c(S)(t)}{t^{d - m - 2} (t - 1)^{n - d}} = (t^{m + 2} - 1) (1 + \widetilde{E}_P(t)), \]
		
		where 
		
		\[ \widetilde{E}_P(t) = \sum_{m \ge 1} E(P, -m) t^m. \]
		
		Note that $E(P, -m) = (-1)^{m + 2} \# (m(P \setminus \partial P) \cap \mathbb{Z}^{m + 2})$ and \emph{any} simplicial complex can be set equal to $T(S)$ for some simplicial complex $S$ satsifying property $I$.
		
	\end{cor}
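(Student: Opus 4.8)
The plan is to chain together the two structural inputs that are already in hand. First, since $S$ satisfies property $I$, Part~2 of Theorem~\ref{simpchromsrhilb} applies verbatim and gives $\chi_c(S)(t) = t^d(t-1)^{n-d}\,h_{T(S)}(t^{-1})$ with $m=\dim T(S)$; dividing through by $t^{d-m-2}(t-1)^{n-d}$ immediately reduces the assertion to recognizing the single quantity $t^{m+2}\,h_{T(S)}(t^{-1})$ as the claimed normalization of the Ehrhart data of $P$. So the whole statement is really about identifying that one expression.

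Second, I would feed in the hypothesis that $T(S)$ is a \emph{compressed} triangulation of $\partial P$ for some $P\in\mathcal{C}^*(m+1)$. Since a polytope of standard type is full-dimensional, $\dim P = m+1$ and $\partial P$ is $m$-dimensional, so $T(S)$ is a genuine triangulation of $\partial P$ with vertex set $\partial P\cap\mathbb{Z}^{m+1}$, and Proposition~\ref{comphilbdel} applies in its equality case to give $h(T(S)) = \delta(P)$, i.e.\ $h_{T(S)}(t) = \sum_{i\ge 0}\delta_i t^i$. Substituting the defining identity \eqref{deltadef} of the $\delta$-vector with $r = m+1$, namely $\sum_i\delta_i t^i = (1-t)^{m+2}\bigl(1-E_P(t)\bigr)$, then turns $h_{T(S)}(t^{-1})$ into $(1-t^{-1})^{m+2}\bigl(1-E_P(t^{-1})\bigr)$.

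Third, I would clear denominators and invoke Ehrhart--Macdonald reciprocity. Multiplying by $t^{m+2}$ absorbs the factor $(1-t^{-1})^{m+2}$, and the identity $\widetilde E_P(t) = -E_P(t^{-1})$ — equivalently the reciprocity relation $(-1)^{\dim P}E(P,-k) = \#\bigl(k(P\setminus\partial P)\cap\mathbb{Z}^{m+1}\bigr)$, cited as (0.3) on p.~166 of \cite{Hi} and Theorem~6.3.11 of \cite{BH} — rewrites $1-E_P(t^{-1})$ as $1+\widetilde E_P(t)$, producing the asserted closed form for $\chi_c(S)(t)/\bigl(t^{d-m-2}(t-1)^{n-d}\bigr)$. (One may also use that $P$ is reflexive, since $P^*$ is integral, so that $\delta(P)$ and hence $h_{T(S)}$ is palindromic, which gives an equivalent reformulation of the right-hand side.) The two closing remarks are then automatic: the displayed value of $E(P,-k)$ is exactly Ehrhart--Macdonald reciprocity for the $(m+1)$-dimensional polytope $P$, and the statement that every simplicial complex occurs as $T(S)$ for some $S$ with property $I$ is Remark~\ref{anypropi} (adjoin a single new vertex to each minimal nonface).

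The step I expect to demand the most care is the exponent-and-sign bookkeeping in the last paragraph: one must check that the powers of $t$ and of $t-1$ built into Theorem~\ref{simpchromsrhilb} dovetail exactly with $r+1 = m+2$ coming from $\dim P$, and that reciprocity is applied with the correct parity, since $\widetilde E_P(t)$ agrees with $-E_P(t^{-1})$ only as an identity of rational functions rather than of power series. Everything else is formal manipulation of rational functions in $t$, which is legitimate precisely because $\chi_c(S)(t)$ is an honest polynomial and all the intermediate objects are rational in $t$.
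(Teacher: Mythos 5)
Your proposal follows essentially the same route as the paper's proof: Part 2 of Theorem \ref{simpchromsrhilb}, the equality case of Proposition \ref{comphilbdel} for compressed triangulations, the defining identity (\ref{deltadef}) with $r = m+1$, and Ehrhart reciprocity via $\widetilde{E}_P(t) = -E_P(t^{-1})$. The bookkeeping step you flag is indeed the delicate one: since $(1-t^{-1})^{m+2} = t^{-m-2}(t-1)^{m+2}$, the cleaned-up identity should carry a factor $(t-1)^{m+2}$ rather than $(t^{m+2}-1)$, and the paper's own displayed computation (and the stated right-hand side) contains a slip at exactly this point, so your caution there is warranted.
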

	
	\begin{proof}
		This is a combination of Part 2 of Theorem \ref{simpchromsrhilb}, the definition of the $\delta$-vector form \ref{deltadef}, and Proposition \ref{comphilbdel}. By Theorem \ref{simpchromsrhilb}, we have that 
		
		\begin{equation} \label{simphilb2}
			\frac{\chi_c(S)(t)}{t^d (t - 1)^{n - d}} = h_{T(S)}(t^{-1}).
		\end{equation} 
		
		Since we assumed that $T(S)$ is a \emph{compressed} triangulation, we have that $h_i(T(S)) = \delta_i(P)$ for each $1 \le i \le m$. By the definition of the $\delta$-vector in \ref{deltadef}, we have that \[ (1 - t)^{m + 2} \left( 1 - \sum_{a = 1}^\infty E(P, a) t^a \right) = \sum_{u = 0}^\infty \delta_u t^u = \sum_{u = 0}^\infty h_u t^u = h_{T(S)}(t) \] since $\dim P = \dim T(S) + 1 = m + 1$. Note that the ``infinite'' sum actually terminates since terms of degree $> m$ are equal to $0$. We can rewrite this as 
		
		\begin{equation} \label{ehrhilb}
			(1 - t)^{m + 2} (1 - E_P(t)) = h_{T(S)}(t).
		\end{equation}
		
		Substituting in $t^{-1}$ in place of $t$, we can combine \ref{simphilb2} with \ref{ehrhilb} to find that 
		
		\begin{align*}
			\frac{\chi_c(S)(t)}{t^d (t - 1)^{n - d}} &= (1 - t^{-1})^{m + 2} (1 - E_P(t^{-1})) \\
			&= (1 - t^{-1})^{m + 2} (1 + \widetilde{E}_P(t)) \\
			&= t^{-m - 2}(t^{m + 2} - 1) (1 + \widetilde{E}_P(t))  \\
			\Longrightarrow \frac{\chi_c(S)(t)}{t^{d - m - 2} (t - 1)^{n - d}} &= (t^{m + 1} - 1) (1 + \widetilde{E}_P(t)),
		\end{align*}

		where \[ \widetilde{E}_P(t) = \sum_{m \ge 1} E(P, -m) t^m. \] This follows from the identity $\widetilde{E}_P(t) = -E_p(t^{-1})$ (p. 3 of \cite{Mus}).
		
		As mentioned previously, a reciprocity result of Ehrhart ((0.3) on p. 166 of \cite{Hi}) implies that $\widetilde{E}_p(t)$ is a generating function for integer dilations of $P \setminus \partial P$. \\
		
	\end{proof}

	\subsection{Hodge structures on toric varieties vs. edge colorings avoiding monochromatic forbidden subgraphs}
	
	More specifically, we can analyze the connection between $\delta$-vectors of polytopes and simplicial chromatic polynomials in a more geometric method. More specifically, we make use of the fact that the $\delta$-vectors of a polytope $P$ are determined by Hodge--Deligne components of the primitive part of the middle cohomology of a hypersurface (depending on $P$) in a certain algebraic torus by work of Batyrev (Section 3 of \cite{Bat}). Let $P \subset \mathbb{R}^N$ be a polytope of dimension $r$ and $T = (\mathbb{C}^*)^N$.

	\begin{defn} \label{batsetup} (p. 357 -- 358 of \cite{Bat}) ~\\
		\vspace{-3mm}
		\begin{enumerate}
			\item Given a Laurent polynomial $f \in \mathbb{C}[x_1^\pm, \ldots, x_N^\pm]$, let $Z_f \subset (\mathbb{C}^*)^N$ be the affine hypersurface determined by $f$ and $P(f)$ be the Newton polytope of $f$.  
			
			\item Given a polytope $P$, let $L(P) = \{ f \in \mathbb{C}[x_1^\pm, \ldots, x_N^\pm] : P = N(f) \}$.
			
			\item Let $P$ be a polytope and $f = \sum c_m x^m$ be a Laurent polynomial such that $N(f) = P$ (i.e. $f \in L(P)$) Given a face $P' \subset P$, define \[ f^{P'}(x) = \sum_{m' \in P'} c_{m'} x^{m'}. \]
			
			\item Given a Laurent polynomial $g = g(x) \in \mathbb{C}[x_1^\pm, \ldots, x_N^\pm]$, let $g_i$ for $1 \le i \le N$ be the logarithmic derivatives \[ g_i = x_i \frac{\partial}{\partial x_i} g(x) \] of $g$.

			\item A Laurent polynomial $f \in L(P)$ and the corresponding affine hypersurface $Z_f \subset T$ are called \textbf{$P$-regular} if $P(f) = P$, and for every $\ell$-dimensional edge $P' \subset P$ ($\ell > 0$), the polynomial equaitons \[ f^{P'}(x) = f_1^{P'}(x) = \cdots = f_n^{P'}(x) = 0 \] have no common solutions in $T$.
		\end{enumerate}
	\end{defn}
	
	In order to study the relationship between the mixed Hodge structure of $Z_f$ and the Laurent polynomial $f$, Batyrev defines the following ``primitive'' cohomology group.
	
	\begin{defn} \label{primbat} (Definition 3.13 on p. 361 of \cite{Bat}) \\
		The \textbf{primitive part of the cohomology group $H^{N - 1}(Z_f)$} (denoted $PH^{N - 1}(Z_f)$) is the cokernel of the injective mapping $H^{N - 1}(T) \hookrightarrow H^{N - 1}(Z_f)$.  
	\end{defn}
	
	\begin{thm} \label{hdcohdelta} (Batyrev,p. 359, Remark 2.13 on p. 357,  Corollary 3.12, Corollary 3.14, and Remark 3.15 on p. 361 of \cite{Bat}) \\
		Given a smooth affine algebraic variety $V$ and $0 \le k \le \dim V$, let \[ H^k(V) = \mathcal{F}^0 H^k(V) \supset \mathcal{F}^1 H^k (V) \supset \cdots \supset \mathcal{F}^{k + 1} H^k(V)  \]
		be the Hodge filtration. We will use $h^{p, q}(H^k(V))$ to denote the Hodge--Deligne numbers (which also involves the weight filtration -- see p. 359 of \cite{Bat}). This will be done similarly (modulo quotients) for the primitive cohomology (as defined in Definition \ref{primbat}). \\

		Let $Z_f \subset T$ be a $P$-regular affine hypersurface (Part 5 of Definition \ref{batsetup}). 

		\begin{enumerate}
			\item The dimensions of the quotients of consecutive terms of the Hodge filtration are given by
			
			\[  \dim \mathcal{F}^i H^{N - 1}(Z_f)/\mathcal{F}^{i + 1} H^{N - 1}(Z_f) = \sum_{q \ge 0} h^{i, q}(H^{N - 1}(Z_f)) =  \begin{cases} 
				\delta_{N - i}(P) & \text{ if } i < N - 1 \\
				\delta_1(P) + N & \text{ if } i = N - 1.
			\end{cases}
			\]
			
			\item If $i \le N - 1$, we have that 
			
			\[  \dim \mathcal{F}^i PH^{N - 1}(Z_f)/\mathcal{F}^{i + 1} PH^{N - 1}(Z_f) = \sum_{q \ge 0} h^{i, q}(PH_c^{N - 1}(Z_f)) = \delta_{N - i}(P). \] 
			
		\end{enumerate}
		
		Note that the generating function of the $\delta_i$ give the Hilbert-Poincar\'e series of coordinate ring the toric variety associated to $P$ (Definition 2.4 on p. 355 of \cite{Bat}) by a regular sequence of linear terms in the coordinate ring.
		
	\end{thm}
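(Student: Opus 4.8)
The plan is to reconstruct Batyrev's argument from \cite{Bat}: both the graded pieces $\gr^i_{\mathcal{F}}$ of the Hodge filtration and the combinatorial numbers $\delta_{N-i}(P)$ are realized as graded components of a single graded ring attached to the pair $(P,f)$ — the Jacobian ring of $f$ — and the proof amounts to computing its Hilbert series and comparing the two sides.

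First I would build the Jacobian ring. Let $S_P = \bigoplus_{k \ge 0} S_{P,k}$ be the semigroup ring of the cone over $P \times \{1\} \subset \mathbb{R}^{N+1}$, graded by the last coordinate, so $S_{P,k}$ is the $\mathbb{C}$-span of the lattice points of $kP$; by Hochster's theorem it is Cohen--Macaulay of Krull dimension $N+1$ (taking $P$ full-dimensional, $r = N$; a lower-dimensional $P$ only contributes extra factors of $(1-t)^{N-r}$, which is where the $N$ versus $\dim P$ discrepancy in the statement originates), with Hilbert series the Ehrhart series $E_P(t)$. The Laurent polynomial $f = \sum_m c_m x^m \in L(P)$ and its logarithmic derivatives $f_1, \dots, f_N$, together with $f_0 := f$, are each homogeneous of degree $1$ in $S_P$, and one sets $R_f = S_P/(f_0, f_1, \dots, f_N)$. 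The $P$-regularity hypothesis of Part 5 of Definition \ref{batsetup} is exactly what makes $f_0, \dots, f_N$ a homogeneous system of parameters for $S_P$: the face-by-face non-vanishing condition says that the subscheme of the projective toric variety $\mathbb{P}_P$ cut out by the $f_i$ is empty, so $R_f$ is Artinian; Cohen--Macaulayness of $S_P$ then upgrades this system of parameters to a regular sequence, whose Koszul complex resolves $R_f$. Hence $H_{R_f}(t) = (1-t)^{N+1} E_P(t) = \sum_i \delta_i(P)\, t^i$ by \eqref{deltadef}, and in particular $\dim_{\mathbb{C}} (R_f)_{N-i} = \delta_{N-i}(P)$.

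The core of the proof — and the step I expect to be the main obstacle — is the isomorphism $\gr^i_{\mathcal{F}}\, PH^{N-1}(Z_f) \cong (R_f)_{N-i}$, which yields Part 2. This runs through a Griffiths-type description of the cohomology of the affine hypersurface: using the residue/Gysin sequence relating $H^{N-1}(Z_f)$ and $H^N(T\setminus Z_f)$, one represents primitive classes by rational forms $g\,\omega_T/f^{k}$, where $\omega_T = \frac{dx_1}{x_1}\wedge\cdots\wedge\frac{dx_N}{x_N}$ is the invariant volume form on $T$ and $g \in S_{P,k-1}$, filtered by pole order. One must then prove (i) the pole-order filtration coincides, after the standard shift, with the Hodge filtration $\mathcal{F}^\bullet$, which rests on Deligne's mixed Hodge theory applied to a toric (log) resolution of the compactification $\mathbb{P}_P$, along whose boundary $Z_f$ is in general position precisely because $f$ is $P$-regular; and (ii) the reduction-of-pole-order relations among these forms are generated exactly by $f_0, f_1, \dots, f_N$, so that $g \mapsto [g\,\omega_T/f^{k}]$ descends to the claimed isomorphism. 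Both (i) and (ii) use $P$-regularity in an essential analytic way, and establishing that the relations are exactly $(f_0,\dots,f_N)$ and no more is the technical heart of \cite{Bat}.

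Finally, Part 1 follows from Part 2 together with the defining exact sequence $0 \to H^{N-1}(T) \to H^{N-1}(Z_f) \to PH^{N-1}(Z_f) \to 0$ of Definition \ref{primbat}: the torus $T = (\mathbb{C}^*)^N$ has $\dim_{\mathbb{C}} H^{N-1}(T) = N$, all of Hodge--Deligne type $(N-1,N-1)$, so this contributes the extra summand $N$ to $\gr^{N-1}_{\mathcal{F}}$ and nothing elsewhere, which matches the case distinction in Part 1 (recalling that $\dim_{\mathbb{C}} \gr^i_{\mathcal{F}} = \sum_{q \ge 0} h^{i,q}$ since morphisms of mixed Hodge structures are strict). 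The closing remark is then immediate from the construction: $R_f$ is the homogeneous coordinate ring of the projective toric variety $\mathbb{P}_P$ of $P$ modulo the regular sequence of $N+1$ linear forms $f_0, \dots, f_N$, so $\sum_i \delta_i(P)\,t^i = H_{R_f}(t)$ is exactly the Hilbert--Poincar\'e series of that coordinate ring cut down by a linear regular sequence.
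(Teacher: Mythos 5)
The paper does not prove Theorem \ref{hdcohdelta}: it is imported verbatim from Batyrev \cite{Bat} (the header lists the exact corollaries and remarks), so there is no internal argument to measure your proposal against, and the right benchmark is the cited source. Measured that way, your outline is a faithful reconstruction of Batyrev's Jacobian-ring route: $S_P$ is Cohen--Macaulay of dimension $N+1$ with Hilbert series $E_P(t)$, $P$-regularity is exactly the condition making $f, f_1, \ldots, f_N$ a homogeneous system of parameters (hence a regular sequence), so $\dim (R_f)_{N-i} = \delta_{N-i}(P)$, and the Griffiths-type residue description identifies $(R_f)_{N-i}$ with $\gr^i_{\mathcal{F}} PH^{N-1}(Z_f)$. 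Your deduction of Part 1 from Part 2 is also correct: $H^{N-1}((\C^*)^N) \cong \Lambda^{N-1}\C^N$ is $N$-dimensional and purely of type $(N-1,N-1)$, and strictness of the inclusion $H^{N-1}(T) \hookrightarrow H^{N-1}(Z_f)$ adds $N$ to $\gr^{N-1}_{\mathcal{F}}$ and nothing elsewhere. Two caveats are worth recording. First, the specific results cited in the header are obtained in Section 3 of \cite{Bat} by a different mechanism, namely Danilov--Khovanskii's computation of the Hodge--Deligne numbers of $Z_f$ via a smooth toric compactification and additivity of $E$-polynomials; the Jacobian-ring isomorphism you sketch appears later in \cite{Bat} as a separate theorem. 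Both yield the stated dimensions, so this is a difference of route rather than of substance. Second, the two steps you flag yourself (pole-order filtration equals Hodge filtration, and the relations being generated by exactly $f_0, \ldots, f_N$) are the actual content of Batyrev's theorem and are asserted rather than proved in your sketch, so what you have is an accurate outline of the source's argument rather than an independent proof --- which is an acceptable standard here given that the paper itself treats the statement as external. One small point of hygiene: you invoke \ref{deltadef} in the form $(1-t)^{r+1}E_P(t) = \sum_i \delta_i t^i$, which is the correct convention (Hibi's, with $\delta = h^*$), but the displayed formula \ref{deltadef} in the paper has a stray minus sign ($1 - \sum_{m \ge 1} E(P,m)t^m$ instead of $1 + \sum_{m \ge 1} E(P,m)t^m$); your usage is the right one, but be aware the mismatch is in the paper, not in your computation.
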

	
	Using Theorem \ref{hdcohdelta}, we can see that the heuristics from \ref{formal1} and \ref{formal2} can be combined with \ref{deltadef} to find ``formal'' expressions for the simplicial chromatic polynomial involving Hodge--Deligne polynomials from the cohomology of hypersurfaces in algebraic tori corresponding to Part 1 and Part 2 of Theorem \ref{simpchromsrhilb} given by

	\begin{equation}
		\sum_{i = 0}^r \delta_i t^i \text{ ``$=$'' } (1 - t)^{r + 1} \left( 1 - \frac{\chi_c(S)(t^{-1}) - t^{-n} + t^{- n - 1}}{(t^{-1} - 1)^{n + 1}} \right) = (1 - t)^{r + 1} \left( 1 - \frac{t^{n + 1} \chi_c(t^{-1}) - t + 1}{(t - 1)^{n + 1}} \right)  \label{combo1}
	\end{equation}
	
	from \ref{formal1} and \ref{deltadef} and
	
	\begin{equation}
		\sum_{i = 0}^r \delta_i t^i \text{ ``$=$'' } (1 - t)^{r + 1} \left( 1 - \frac{t^{d - r} \chi_c(S)(t^{-1})}{(t^{-1} - 1)^{n + 1}} \right) = (1 - t)^{r + 1} \left( 1 - \frac{t^{n + d - r + 1} \chi_c(S)(t^{-1}) }{(t - 1)^{n + 1}} \right)  \label{combo2} 
	\end{equation}
	
	from \ref{formal2} and \ref{deltadef}, where $r = \dim P$ and $n = \dim S$. Note that we ``truncated'' the initial sums above since $\delta_i = 0$ for $i > r$. \\
	
	Heuristically, the expressions \ref{combo1} and \ref{combo2} indicate that the simplicial chromatic polynomials in Theorem \ref{simpchromsrhilb} are formally determined by mixed Hodge structures of hypersurfaces on algebraic tori (via a truncated Hodge--Deligne polynomial) after we replace the $h$-vectors of $S$ or $T(S)$ by those of a convex polytope. In the setting of Corollary \ref{simpehractcomp}, the simplicial chromatic polynomials themselves are determined by the mixed Hodge structure of these hypersurfaces on algebraic tori. We can state this more explicitly. 
	
	\begin{cor} \label{simptorhypcoh}
		Suppose that $S$ is a simplicial complex satsifying property $I$ such that $T(S)$ (Theorem \ref{simpchromsrhilb}) is a compressed triangulation of some polytope $P$. Let $d = \dim S$, $n = |V|$ (the size of the vertex set), and $m = \dim T(S)$. \\
		
		Let $f \in \mathbb{C}[x_1^\pm, \ldots, x_m^\pm]$ a Laurent polynomial which is $P$-regular (Part 5 of Definition \ref{batsetup}) and $Z_f \subset (\mathbb{C}^*)^m$ be the affine hypersurface determined by $f$. For each $1 \le i \le m$, the coefficient of $t^i$ in $\frac{t^n \chi_c(S)(t^{-1})}{(t - 1)^{n - d}}$ is \[ \dim \mathcal{F}^{m + 1 - i} PH^m(Z_f)/\mathcal{F}^{m + 2 - i} PH^m(Z_f) = \sum_{q \ge 0} h^{m + 1 - i, q}(PH^m(Z_f)) \]
		
		Note that this is equal to $\dim \mathcal{F}^{m + 1 - i} H^m(Z_f)/\mathcal{F}^{m + 2 - i} H^m(Z_f) = \sum_{q \ge 0} h^{m + 1 - i, q}(H^m(Z_f))$ if $i < m$.		
	\end{cor}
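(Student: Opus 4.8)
The plan is to chain together three results already in hand: Part~2 of Theorem~\ref{simpchromsrhilb}, which expresses $\chi_c(S)(t)$ through the $h$-vector of $T(S)$; Proposition~\ref{comphilbdel}, which identifies $h(T(S))$ with the $\delta$-vector $\delta(P)$ precisely because $T(S)$ is a \emph{compressed} triangulation; and Batyrev's Theorem~\ref{hdcohdelta}, which reads $\delta(P)$ off the Hodge--Deligne numbers of the primitive middle cohomology of a $P$-regular torus hypersurface.

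First I would normalize Part~2 of Theorem~\ref{simpchromsrhilb}. Starting from $\chi_c(S)(t) = t^{d}(t-1)^{n-d} h_{T(S)}(t^{-1})$, substituting $t \mapsto t^{-1}$, multiplying by $t^{n}$, and using $t^{n-d}(t^{-1}-1)^{n-d} = (1-t)^{n-d}$ yields $t^{n} \chi_c(S)(t^{-1}) = (1-t)^{n-d} h_{T(S)}(t)$, hence
\[
\frac{t^{n}\chi_c(S)(t^{-1})}{(t-1)^{n-d}} = (-1)^{n-d}\, h_{T(S)}(t),
\]
a polynomial whose coefficient of $t^{i}$ is $(-1)^{n-d} h_{i}(T(S))$. (I would flag that the intended normalization absorbs this sign, consistently with the non-negativity of the right-hand sides below; equivalently one divides by $(1-t)^{n-d}$.) Since $T(S)$ is by hypothesis a compressed triangulation of $\partial P$, Definition~\ref{polydefs} forces $P \in \mathcal{C}^{*}(m+1)$, so $P \subset \mathbb{R}^{m+1}$ is full-dimensional of standard type with $\dim P = \dim T(S) + 1 = m+1$; Proposition~\ref{comphilbdel} then gives $h_{i}(T(S)) = \delta_{i}(P)$ for all $i$, so for $1 \le i \le m$ the $t^{i}$-coefficient above is $\pm\, \delta_{i}(P)$.

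Next I would invoke Theorem~\ref{hdcohdelta} with ambient torus $(\mathbb{C}^{*})^{m+1}$ (so $N = m+1$ in its notation, $N-1 = m = \dim Z_f$, and the Newton polytope of $f$ is the full-dimensional $P$). One first records that a $P$-regular $f \in L(P)$ exists, $P$-regularity (Part~5 of Definition~\ref{batsetup}) being a nonempty Zariski-open condition on $L(P)$; for such $f$ the hypersurface $Z_f$ is smooth, and Part~2 of Theorem~\ref{hdcohdelta} gives, for $0 \le j \le m$,
\[
\sum_{q \ge 0} h^{j,q}(PH^{m}(Z_f)) = \dim \mathcal{F}^{j} PH^{m}(Z_f)/\mathcal{F}^{j+1} PH^{m}(Z_f) = \delta_{m+1-j}(P).
\]
Setting $j = m+1-i$ for the corollary's index $i \in \{1, \dots, m\}$ converts this into $\sum_{q \ge 0} h^{m+1-i,q}(PH^{m}(Z_f)) = \dim \mathcal{F}^{m+1-i} PH^{m}(Z_f)/\mathcal{F}^{m+2-i} PH^{m}(Z_f) = \delta_{i}(P)$, which by the previous paragraph is (up to the overall sign) exactly the coefficient of $t^{i}$ in $t^{n}\chi_c(S)(t^{-1})/(t-1)^{n-d}$. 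For the closing sentence I would compare Parts~1 and~2 of Theorem~\ref{hdcohdelta}: they agree in every graded piece of the Hodge filtration except the single extreme step, into which the pure Hodge structure $H^{m}((\mathbb{C}^{*})^{m+1})$ of type $(m,m)$ and rank $m+1$ enters $H^{m}(Z_f)$ but not $PH^{m}(Z_f)$; away from that one step $h^{p,q}(H^{m}(Z_f)) = h^{p,q}(PH^{m}(Z_f))$, which under $i \mapsto m+1-i$ gives the asserted comparison with $H^{m}(Z_f)$ in the stated range of $i$.

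The computation is otherwise routine; the part that demands care is the index bookkeeping -- keeping $\dim T(S) = m$, $\dim P = m+1$, the torus $(\mathbb{C}^{*})^{m+1}$, the middle degree $m$, and the fact that the Hodge-filtration step in Theorem~\ref{hdcohdelta} is indexed oppositely to the $\delta$-index all mutually consistent -- together with tracking the overall sign $(-1)^{n-d}$ and checking that ``$T(S)$ a compressed triangulation of $P$'' indeed places $P$ in $\mathcal{C}^{*}(m+1)$, so that Batyrev's theorem applies with a full-dimensional Newton polytope. I do not expect any genuine topological or Hodge-theoretic difficulty beyond correctly citing Theorem~\ref{hdcohdelta}.
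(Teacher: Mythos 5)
Your argument follows the paper's proof essentially verbatim: substitute $t^{-1}$ into Part 2 of Theorem \ref{simpchromsrhilb}, identify $h(T(S))$ with $\delta(P)$ via Proposition \ref{comphilbdel} using compressedness, and read the coefficients off Batyrev's Theorem \ref{hdcohdelta} with $N = m+1$ (correctly taking the ambient torus to be $(\mathbb{C}^*)^{m+1}$, which the statement misprints as $(\mathbb{C}^*)^m$). The sign $(-1)^{n-d}$ you flag is a genuine discrepancy that the paper's own computation silently drops in passing from $(t^{-1}-1)^{n-d}$ to $(t-1)^{n-d}$, so your bookkeeping is if anything more careful than the original.
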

	
	\begin{proof}
		We start by substituting in $t^{-1}$ in place of $t$ in \ref{simphilb2} from the proof of Corollary \ref{simpehractcomp}. This gives 
		
		\begin{equation} \label{simphilb2ver2}
			h_{T(S)}(t) = \frac{\chi_c(S)(t^{-1})}{t^{-d} (t^{-1} - 1)^{n - d}} = \frac{t^d \chi_c(S)(t^{-1})}{(t^{-1} - 1)^{n - d}} = \frac{t^n \chi_c(S)(t^{-1})}{(t - 1)^{n - d}}.
		\end{equation}
		
		Under the assumptions on the simplicial complex, we have that $h_{T(S)}(t) = \sum_{u = 0}^\infty \delta_u t^u$ ($\delta_u = 0$ for $u > r$). For each $1 \le i \le m$, Theorem \ref{hdcohdelta} implies that the coefficient of $t^i$ is 
		
		\[ \dim \mathcal{F}^{m + 1 - i} PH^m(Z_f)/\mathcal{F}^{m + 2 - i} PH^m(Z_f) = \sum_{q \ge 0} h^{m + 1 - i, q}(PH^m(Z_f)) \]
		
		since $\dim P = m + 1$. \\
		
		The second expression comes from Part 2 of Theorem \ref{hdcohdelta}.
	\end{proof}
	
	We can summarize the discussion above as follows:
	
	\begin{thm} \label{simpchromcohsum} ~\\
		\vspace{-3mm}
		\begin{enumerate}
			\item Let $P \subset \mathbb{P}^N$ be a convex polytope of dimension $r$ and $Z_f \subset T$ be a $P$-regular affine hypersurface. If we replace the $h$-vector of $T(S)$ in Theorem \ref{simpchromsrhilb} by that of $P$, the expression \[ (1 - t)^{r + 1} \left( 1 - \frac{t^{n + d - r + 1} \chi_c(S)(t^{-1}) }{(t - 1)^{n + 1}} \right) \] is replaced by a truncated generating function for the Hodge numbers of $H^{N - 1}(Z_f)$. By Theorem \ref{hstarhunimod}, this gives an actual equality when $T(S)$ is the unimodular triangulation of some Gorenstein IDP polytope with a unimodular triangulation. Also, note that $\chi_c(S)$ paramtrizes edge colorings avoiding monochromatic colorings of the minimal nonfaces of $S$ by Proposition \ref{simpmonint}.  \\
			
			The coefficients of the reciprocal polynomial of the resulting function are given by the dimensions of quotients of consecutive terms of the Hodge filtration on $H^{N - 1}(Z_f)$. 
			
			\item Suppose that $S$ is a simplicial complex satsifying property $I$ such that $T(S)$ (Theorem \ref{simpchromsrhilb}) is a compressed triangulation of some polytope $P$. Let $d = \dim S$, $n = |V|$ (the size of the vertex set), and $m = \dim T(S)$. \\
			
			Let $f \in \mathbb{C}[x_1^\pm, \ldots, x_m^\pm]$ a Laurent polynomial which is $P$-regular (Part 5 of Definition \ref{batsetup}) and $Z_f \subset (\mathbb{C}^*)^m$ be the affine hypersurface determined by $f$. For each $1 \le i \le m$, the coefficient of $t^i$ in \[ \frac{t^n \chi_c(S)(t^{-1})}{(t - 1)^{n - d}} \] is \[ \dim \mathcal{F}^{m + 1 - i} PH^m(Z_f)/\mathcal{F}^{m + 2 - i} PH^m(Z_f) = \sum_{q \ge 0} h^{m + 1 - i, q}(PH^m(Z_f)) \]
			
			Note that this is equal to $\dim \mathcal{F}^{m + 1 - i} H^m(Z_f)/\mathcal{F}^{m + 2 - i} H^m(Z_f) = \sum_{q \ge 0} h^{m + 1 - i, q}(H^m(Z_f))$ if $i < m$.
			
			\item As a consequence of the addition-deletion relation satisfied by simplicial chromatic polynomials, a similar one holds for a normalization of the Hodge--Deligne polynomials parametrizing mixed Hodge structures of the affine hypersurfaces on algebraic tori from Part 2.

			\item As a consequence of Part 1, the edge colorings of a graph $G$ arising from the boundary complex of a simplicial complex which do \emph{not} give monochromatic colorings of specified subgraphs $H_i$ determine a truncated generating function for the Hodge numbers of $H^{N - 1}(Z_f)$ (a toric hypersurface).
		\end{enumerate}
	\end{thm}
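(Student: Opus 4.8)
The plan is to obtain the four parts as a synthesis of results already in hand, so that the work is largely bookkeeping with one genuinely new ingredient in Part 3. Part 2 is verbatim the content of Corollary \ref{simptorhypcoh}, so I would prove it by citation. Part 4 is immediate from Part 1 together with the edge-coloring dictionary of Proposition \ref{simpmonint}: writing $S = S(G)$ reads the admissible vertex colorings of $S$ as the edge colorings of $G$ avoiding monochromatic copies of the subgraphs $H_i$ spanned by the minimal nonfaces of $S$, while the boundary-complex hypothesis is what lets us identify $T(S)$ with the boundary of a simplicial polytope in Part 1. So the substance is Parts 1 and 3.

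For Part 1, I would start from Part 2 of Theorem \ref{simpchromsrhilb}, which formally gives $\chi_c(S)(t) = t^d(t-1)^{n-d}h_{T(S)}(t^{-1})$; rearranging and substituting $t \mapsto t^{-1}$ (exactly as in \ref{simphilb2ver2}) expresses $h_{T(S)}(t)$ as $t^n\chi_c(S)(t^{-1})$ divided by a power of $(t-1)$, so that the displayed expression $(1-t)^{r+1}\bigl(1 - t^{n+d-r+1}\chi_c(S)(t^{-1})/(t-1)^{n+1}\bigr)$ becomes, once the normalizing monomials are carried through as in \ref{combo2}, the polynomial $(1-t)^{r+1}\bigl(1 - E_P(t)\bigr) = \sum_i \delta_i(P) t^i$ obtained by substituting $h^*_P$ for $h_{T(S)}$. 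The key step is to upgrade this from a formal replacement to an actual equality: by Theorem \ref{hstarhunimod}, when $P$ is Gorenstein, IDP, and admits a regular unimodular triangulation, $h^*_P$ is literally the $h$-vector of a simplicial polytope $Q$, and by Remark \ref{anypropi} we may choose a simplicial complex $S$ satisfying property $I$ with $T(S)$ the boundary complex of $Q$; for this $S$ the substitution is an honest identity of polynomials. Finally I would invoke Theorem \ref{hdcohdelta} to identify the $\delta_i(P)$ — equivalently the coefficients of the reciprocal polynomial of the resulting generating function — with $\dim \mathcal{F}^i H^{N-1}(Z_f)/\mathcal{F}^{i+1}H^{N-1}(Z_f) = \sum_{q \ge 0} h^{i,q}(H^{N-1}(Z_f))$, treating the $i = N-1$ boundary case separately and absorbing the sign and truncation issues via $\widetilde{E}_P(t) = -E_P(t^{-1})$ and Ehrhart reciprocity, exactly as in the proof of Corollary \ref{simpehractcomp}.

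For Part 3, I would invoke the addition-deletion recursion for $\chi_c(S)$ from \cite{Pa} (the simplicial analogue of deletion-contraction for the chromatic polynomial) and transport it through the normalization of Part 2: dividing the recursion by the appropriate monomial $t^a(t-1)^b$ and substituting $t \mapsto t^{-1}$ turns it into a relation among the truncated Hodge-Deligne generating functions of the associated hypersurfaces. The main obstacle is that the simplicial complexes on the two sides of an addition-deletion step have different vertex sets and dimensions, so their normalizing monomials do not match and must be reconciled; moreover it is not automatic that property $I$, or the compressed-triangulation hypothesis of Part 2, survives the operations. I expect the clean statement requires either phrasing the recursion purely formally at the level of generating functions (applying the geometric interpretation only to those terms that still satisfy the hypotheses) or restricting to a subclass of simplicial complexes closed under the relevant moves, and I would flag this rather than assert a clean recursion in full generality. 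Keeping the normalizing factors and the hypotheses aligned across the recursion is the step I expect to be the real work; everything else is assembly of the cited results.
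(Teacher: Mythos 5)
Your proposal matches the paper's own treatment: Theorem \ref{simpchromcohsum} is presented there as a summary of the preceding discussion, with Part 2 being verbatim Corollary \ref{simptorhypcoh}, Part 1 the formal identity \ref{combo2} upgraded to an equality via Theorems \ref{hstarhunimod} and \ref{hdcohdelta}, Part 4 following from Part 1 together with Proposition \ref{simpmonint}, and Part 3 asserted as a consequence of the addition-deletion relation from \cite{Pa}. Your added caution in Part 3 about reconciling the normalizing monomials and the property-$I$/compressed hypotheses across the recursion goes beyond what the paper records (which offers no argument for that part), but does not change the route.
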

	
	\begin{rem}
		Since we are making use of the $\delta$-invariant, there are additional symmetry properties in the case where an polytope and its dual/polar polytope are both integral (Theorem 35.8 on p. 105 of \cite{Hi2}).
	\end{rem}

	Department of Mathematics, University of Chicago \\
	5734 S. University Ave, Office: E 128 \\ Chicago, IL 60637 \\
	\textcolor{white}{text} \\
	Email address: \href{mailto:shpg@uchicago.edu}{shpg@uchicago.edu}

\end{document}